\documentclass{article}
\usepackage{amsmath, amsfonts, newlfont, latexsym, amscd, amsxtra, amssymb, amsbsy, amsgen, amsopn, fontenc, fontsmpl, amsthm, xypic, amsthm}
\usepackage[left=3.5truecm,nohead,right=3.5truecm,top=3.5truecm,bottom=3.5truecm]{geometry}
\linespread{1.3}

\newtheorem{definition}{Definition}[section]
\newtheorem{lemma}[definition]{Lemma}

\newtheorem{example}[definition]{Example}
\newtheorem{proposition}[definition]{Proposition}
\newtheorem{corollary}[definition]{Corollary}
\newtheorem{theorem}[definition]{Theorem}

\def\q{\frak q}
\def\reg{\operatorname{reg}}

\setlength{\oddsidemargin}{0.25in}

\setlength{\textwidth}{6in}

\setlength{\topmargin}{-0.25in}

\setlength{\headheight}{0.3in}

\setlength{\headsep}{0.2in}

\setlength{\textheight}{9in}

\setlength{\footskip}{0.1in}

\begin{document}

\makeatletter      
\renewcommand{\ps@plain}{%
    \renewcommand{\@oddhead}{\textrm{REGULARITY INDEX, HILBERT FUNCTION AND EMBEDDING}\hfil\textrm{\thepage}}%
     \renewcommand{\@evenhead}{\@oddhead}%
     \renewcommand{\@oddfoot}{}
     \renewcommand{\@evenfoot}{\@oddfoot}}
\makeatother     

\title{THE REGULARITY INDEX, HILBERT FUNCTION OF FAT POINTS AND THE EMBEDDING}         
\author{Phan Van Thien and Phan Quang Nhu Anh}
\date{}          
\maketitle 
\pagestyle{plain}

\begin{abstract}\noindent Let $m \ge n$, $\phi_{n,m}: \mathbb P^n \to \mathbb P^m$,   $\phi_{n,m}(a_1, \ldots, a_n)=(a_1, \ldots, a_n, 0, \ldots, 0)$, be the embedding, $Z=m_1P_1+\cdots+m_sP_s$ be fat points in $\mathbb P^n$ and $\phi_{n,m}(Z)=m_1\phi_{n,m}(P_1)+\cdots+m_s\phi_{n,m}(P_s)$ be fat points in $\mathbb P^m$. We show the relation between the regularity index, Hilbert function of the fat points $Z$ and the regularity index, Hilbert function of the fat points $\phi_{n,m}(Z)$. 
\end{abstract}

\noindent {\it Keywords:} Regularity index; fat points.

\par \noindent {Mathematics Subject Classification 2020:} 14C20, 13D40.

\par \section{Introduction} Denote by $\mathbb P^n$ the $n$-dimensional  projective space over an algebraically closed field $K$, and by $R_n=K[X_0, \ldots, X_n]$ the polynomial ring in variables $X_0, \ldots, X_n$ over $K$. Let $P_1, \ldots, P_s$ be distinct
points in $\mathbb P^n$, and let $m_1, \ldots, m_s$ be positive integers. Denote by $\wp_i$ the  defining homogeneous prime ideal of $P_i$ in $R$, $i=1, \ldots, s$. We call the zero-scheme defined by the ideal $\wp^{m_1}_1 \cap \cdots \cap
\wp^{m_s}_s$ to be a set of fat points in $\mathbb P^n$ (we also call it to be fat points in $\mathbb P^n$), and denote it by  $$Z=m_1P_1+\cdots+m_sP_s.$$ 
Put $I_Z=\wp^{m_1}_1 \cap \cdots \cap
\wp^{m_s}_s$,  is the defining ideal of $Z$. The ring $R_n/I_Z$ is the  homogeneous coordinate ring of $Z$. It is graded, $R_n/I_Z=\underset{t
\ge 0}{\oplus} (R_n/I_Z)_t$, and it has the multiplicity to be 
$e(R_n/I_Z)
=\underset{i=1}{\overset{s}{\sum}} \binom{m_i+n-1}{n}$. For each $t$, the $t$-th graded part $(R_n/I_Z)_t$ is a finite dimensional $K$-vector space. The function $$H_{R_n/I_Z}(t)=\dim_K (R_n/I_Z)_t$$
is called the Hilbert function of $R_n/I_Z$. This function strictly increases until it reaches the multiplicity $e(R_n/I_Z)$, at which it stabilizes. The number 
$$\reg(R_n/I_Z)=\min\{t\in \mathbb N \ | \ H_{R_n/I_Z}(t)=e(R_n/I_Z)\}$$
is called the regularity index of the coordinate ring $R_n/I_Z$ (we call also it to be the regularity index of $Z$, and denote it by $\reg(Z)$).

Let $m$ be an integer, $m >n$,  and let 
\begin{align*}
\phi_{n,m}: \mathbb P^n &\to \mathbb P^m\\
P=(a_1, \ldots, a_n) &\mapsto \phi_{n,m}(P)=(a_1, \ldots, a_n, 0, \ldots, 0)
\end{align*}
be the embedding. Put $R_m=K[X_0, \ldots, X_n, \ldots, X_m]$ and denote by $\q_i$ the  defining homogeneous prime ideal of $\phi_{n,m}(P_i)$, $i=1, \ldots, s$. Put $$\phi_{n,m}(Z)=m_1\phi_{n,m}(P_1)+\cdots+m_s\phi_{n,m}(P_s)$$ to be the fat points defined by $\q_1^{m_1}\cap \cdots \cap \q_s^{m_s}$. 
The coordinate ring of $\phi_{n,m}(Z)$ is $R_m/I_{\phi_{n,m}(Z)}$, it is graded, $R_m/I_{\phi_{n,m}(Z)} =\underset{t\ge 0}{\oplus} (R_m/I_{\phi_{n,m}(Z)})_t$. The Hilbert function  of $R_m/I_{\phi_{n,m}(Z)}$ (or of $\phi_{n,m}(Z)$) is 
$$H_{R_m/I_{\phi_{n,m}(Z)}}(t)=\dim_K (K[X_0, \ldots, X_m]/I_{\phi_{n,m}(Z)})_t.$$
This function strictly increases until it reaches the multiplicity $e(R_m/I_{\phi_{n,m}(Z)})=\underset{i=1}{\overset{s}{\sum}} \binom{m_i+m-1}{m}$, at which it stabilizes.  The regularity index of $R_m/I_{\phi_{n,m}(Z)}$ (or of fat points $\phi_{n,m}(Z)$) is the number
$$\reg(R_m/I_{\phi_{n,m}(Z)})=\min\{t\in \mathbb N \ | \ H_{R_m/I_{\phi_{n,m}(Z)}}(t)=e(R_m/I_{\phi_{n,m}(Z)})\}.$$
In this paper, we first show that the regularity index of fat points is invariant under the embedding, i.e.
$$\reg(R_n/I_Z) = \reg(R_m/I_{\phi_{n,m}(Z)})  \quad \text{ (Theorem \ref{thm37})}$$
 by using the algebra method of Catalisano, Trung and Valla in \cite{CTV}. This theorem shows  that some results about the regularity index of the set of non-degenerate fat points (see \cite{BDP}, \cite{BFL}, ...) still hold for a larger class of fat points. Next we show that the Hilbert function of fat points is not invariant under the embedding,  they have the relation as following:  $$H_{R_m/I_{\phi_{n,m}(Z)}}(t)  \ge  H_{R_n/I_Z}(t)$$
with $t\ge \reg(R_n/I_Z)$ (Proposition \ref{prop41}) and 
\begin{align*}H_{R_{m}/I_{\phi_{n,m}(Z)}}(t) &= H_{R_n/I_Z}(t) + \binom{t+m}{m} - \binom{t+n}{n} \\
	&- \underset{i=0}{\overset{t-1}{\sum}}\binom{m-n-1+t-i}{t-i}  \left(   \binom{i+n}{n} - H_{R_n/(\wp_1^{m_1-t+i} \cap \cdots \cap \wp_s^{m_s-t+i})}(i)\right) \end{align*}
with $0\le  t<\reg(R_n/I_Z)$ (Theorem \ref{thm45}). Then we show that 
$$H_{R_{m}/I_{\phi_{n,m}(Z)}}(t) \ge H_{R_n/I_Z}(t)$$
with $t\ge 0$, and if there exists $m_j\ge 2$, then $$H_{R_{m}/I_{\phi_{n,m}(Z)}}(t) > H_{R_n/I_Z}(t)\quad (\text{Corollary }\ref{cor46}).$$
The above results are the answer to Question 1 in \cite{Thien}. In addition, using the above results and the known results about the Hilbert function of fat points (see  \cite{A1}-\cite{A4},\cite{CEG}, \cite{H}, ...) we can get results about the Hilbert function of another  fat points.

\section{Preliminaries} 

Denote by $\wp_i \subset K[X_0, \ldots, X_n]$ the prime ideal defining by $P_i \in \mathbb P^n$. The following lemmas have been proved in  \cite{CTV}.

\begin{lemma}\label{lem21} \cite[Lemma 1]{CTV} Let $P_1,\ldots , P_s$ be
distinct points in $\mathbb P^n$ and $m_1,\ldots , m_s$ are positive integers. Put $J = \wp^{m_1}_1\cap
\cdots\cap  \wp_{s-1}^{m_{s-1}}$ and $I = J \cap \wp_s^{m_s}$. Then $$\reg(R_n/I) =
\max\left\{m_s-1, \reg(R_n/J), \reg(R_n/(J+\wp_s^{m_s-1})) \right\}.$$ \end{lemma}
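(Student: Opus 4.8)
The plan is to convert the assertion into a single inclusion--exclusion identity among Hilbert functions and then read off the regularity index as the largest of three thresholds. Because passing to the intersection and to the sum of homogeneous ideals is compatible with the grading, in each degree $t$ one has $(J \cap \wp_s^{m_s})_t = J_t \cap (\wp_s^{m_s})_t$ and $(J + \wp_s^{m_s})_t = J_t + (\wp_s^{m_s})_t$ as subspaces of $(R_n)_t$. The elementary relation $\dim(U \cap V) + \dim(U + V) = \dim U + \dim V$ then gives, for every $t$,
\[
H_{R_n/I}(t) = H_{R_n/J}(t) + H_{R_n/\wp_s^{m_s}}(t) - H_{R_n/(J + \wp_s^{m_s})}(t),
\]
which is the Hilbert-function form of the Mayer--Vietoris sequence
\[
0 \to R_n/I \to R_n/J \oplus R_n/\wp_s^{m_s} \to R_n/(J + \wp_s^{m_s}) \to 0.
\]

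Next I would exploit that $J$ and $\wp_s^{m_s}$ have disjoint support, so their multiplicities add and $e(R_n/I) = e(R_n/J) + \binom{m_s + n - 1}{n}$. Subtracting the identity above from this, the defect $e(R_n/I) - H_{R_n/I}(t)$ splits as the sum of the three nonnegative integers $e(R_n/J) - H_{R_n/J}(t)$, $\ \binom{m_s + n - 1}{n} - H_{R_n/\wp_s^{m_s}}(t)$, and $H_{R_n/(J + \wp_s^{m_s})}(t)$. Each vanishes exactly past a threshold: the first for $t \ge \reg(R_n/J)$ and the second for $t \ge m_s - 1$, because the Hilbert functions of $R_n/J$ and of the single fat point $R_n/\wp_s^{m_s}$ increase to their multiplicities and then stabilize, a direct monomial count giving $\reg(R_n/\wp_s^{m_s}) = m_s - 1$; the third for $t \ge \reg(R_n/(J + \wp_s^{m_s}))$, since $V(J + \wp_s^{m_s}) = \emptyset$ forces $R_n/(J + \wp_s^{m_s})$ to be Artinian, and as $R_n/(J+\wp_s^{m_s})$ is standard graded its piece in degree $t+1$ is spanned by $(R_n)_1$ times its piece in degree $t$, so once a graded component vanishes all later ones do too.

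A sum of nonnegative integers vanishes iff each summand does, so $H_{R_n/I}(t) = e(R_n/I)$ holds exactly when $t$ meets all three thresholds; the least such $t$ is their maximum, giving
\[
\reg(R_n/I) = \max\{m_s - 1,\ \reg(R_n/J),\ \reg(R_n/(J + \wp_s^{m_s}))\}.
\]
It then remains to match the last term with $\reg(R_n/(J + \wp_s^{m_s-1}))$ as recorded in the statement, which I would do by comparing the two Artinian quotients $R_n/(J + \wp_s^{m_s})$ and $R_n/(J + \wp_s^{m_s-1})$ through the natural surjection between them, analyzing its kernel $(J + \wp_s^{m_s-1})/(J + \wp_s^{m_s})$ and tracking the degrees in which the two quotients cease to vanish. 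I expect this comparison of the two socles---quantifying how passing from $\wp_s^{m_s-1}$ to $\wp_s^{m_s}$ shifts the last nonzero degree of the Artinian quotient---to be the only genuinely delicate point; everything else is the inclusion--exclusion identity together with the monotonicity of the Hilbert functions of $R_n/J$ and $R_n/\wp_s^{m_s}$ recalled in the introduction.
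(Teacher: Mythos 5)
Your core argument is correct, and it is in substance the original proof of this lemma in \cite{CTV}; note that the paper under review gives no proof at all (the lemma is simply quoted as \cite[Lemma 1]{CTV}), so the only comparison available is with that source. Your chain --- the graded Mayer--Vietoris sequence, the additivity $e(R_n/I)=e(R_n/J)+\binom{m_s+n-1}{n}$ coming from the disjoint supports, and the splitting of the defect $e(R_n/I)-H_{R_n/I}(t)$ into three nonnegative terms whose vanishing thresholds are $\reg(R_n/J)$, $m_s-1$, and the first degree in which the Artinian quotient $R_n/(J+\wp_s^{m_s})$ vanishes --- is complete and yields
$$\reg(R_n/I)=\max\left\{m_s-1,\ \reg(R_n/J),\ \reg(R_n/(J+\wp_s^{m_s}))\right\};$$
in particular your remark that vanishing of one graded piece of a standard graded algebra forces vanishing of all later pieces is exactly what identifies the third threshold with the regularity index of the Artinian quotient.

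The one step you deferred --- matching $\reg(R_n/(J+\wp_s^{m_s}))$ with the $\reg(R_n/(J+\wp_s^{m_s-1}))$ printed in the statement --- cannot be carried out, because the exponent $m_s-1$ in the statement is a misprint for $m_s$. The paper itself confirms this: Lemma \ref{lem22} is introduced precisely to estimate $\reg(R_n/(J+\wp^{m_s}))$, and in the proof of Proposition \ref{prop36} the present lemma is invoked in the form $\reg(R_n/I_Z)=\max\{m_s-1,\reg(R_n/I_U),\reg(R_n/(I_U+\wp_s^{m_s}))\}$. Moreover the printed version is actually false: take $n=2$, $P_1=(1:0:0)$, $P_2=(0:1:0)$, $m_1=m_2=2$, so $J=\wp_1^2$ and $I=\wp_1^2\cap\wp_2^2$. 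Then $R_2/(\wp_1^2+\wp_2)\cong K[X_1]/(X_1^2)$ has Hilbert function $1,1,0,\ldots$, so the printed right-hand side equals $\max\{1,1,2\}=2$, while Lemma \ref{lem23} gives $\reg(R_2/I)\ge m_1+m_2-1=3$ (and your formula correctly returns $\max\{1,1,3\}=3$, since $R_2/(\wp_1^2+\wp_2^2)$ has Hilbert function $1,3,1,0,\ldots$). So your argument as written, with exponent $m_s$, is the full proof; the concluding reconciliation you anticipated should be replaced by the observation that the statement contains a typographical error.
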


To estimate  $\reg(R_n/(J+\wp^{m_s}))$ we shall  use the following lemma. \par
\begin{lemma}\label{lem22} \cite[Lemma 3]{CTV}
Let $P_1,\ldots , P_s$ be
distinct points in $\mathbb P^n$ and $m_1,\ldots , m_s$ are positive integers. If $J = \wp^{m_1}_1\cap
\cdots\cap  \wp_{s-1}^{m_{s-1}}$ and $\wp_s = (X_1,\ldots , X_n)$, then
$$\reg(R_n/(J+\wp_s^{m_s})) \le b$$ if and only if $X_0^{b-i}M \in J+\wp_s^{i+1}$ for
every monomial $M$ of degree $i$ in $X_1,\ldots , X_n$; $i = 0,\ldots , m_s-1$.
\end{lemma}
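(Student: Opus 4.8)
The plan is to convert the statement $\reg(R_n/(J+\wp_s^{m_s}))\le b$ into a membership condition on the monomials of a single degree, and then to move between the ideals $J+\wp_s^{m_s}$ and $J+\wp_s^{i+1}$ by a descending induction on the degree in $X_1,\dots,X_n$.

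First I would record the geometric input. Because $P_1,\dots,P_s$ are distinct, the schemes $V(J)$ and $V(\wp_s^{m_s})$ are disjoint in $\mathbb P^n$; hence $J+\wp_s^{m_s}$ contains a power of $(X_0,\dots,X_n)$, so $R_n/(J+\wp_s^{m_s})$ has finite length and its multiplicity is $0$. By the definition of the regularity index, $\reg(R_n/(J+\wp_s^{m_s}))\le b$ then holds exactly when its Hilbert function vanishes for all $t\ge b$. Since $(R_n)_t=(R_n)_{t-b}\,(R_n)_b$ for $t\ge b$, this is equivalent to the single condition
$$(\star)\qquad (R_n)_b\subseteq J+\wp_s^{m_s}.$$
Every monomial of degree $b$ can be written $X_0^{b-i}M$ with $M$ a monomial of degree $i$ in $X_1,\dots,X_n$; if $i\ge m_s$ then $M\in\wp_s^{m_s}$, so $(\star)$ only needs to be checked for $0\le i\le m_s-1$. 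I expect this calibration to be the main subtlety: one must read off that, for the finite-length ring $R_n/(J+\wp_s^{m_s})$, the regularity index is \emph{one more} than its top nonvanishing degree, so that $\reg\le b$ corresponds to the degree-$b$ (not degree-$(b+1)$) monomials.

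For the forward implication $(\Rightarrow)$ the condition $(\star)$ is immediate to exploit: for $0\le i\le m_s-1$ and any monomial $M$ of degree $i$, the monomial $X_0^{b-i}M$ has degree $b$, hence lies in $J+\wp_s^{m_s}\subseteq J+\wp_s^{i+1}$ because $i+1\le m_s$, which is precisely the asserted membership.

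For the converse $(\Leftarrow)$ I would prove $(\star)$ by descending induction on $i=m_s-1,\dots,0$, the claim at stage $i$ being that $X_0^{b-i}M\in J+\wp_s^{m_s}$ for every monomial $M$ of degree $i$ in $X_1,\dots,X_n$. When $i=m_s-1$ the hypothesis gives $X_0^{b-i}M\in J+\wp_s^{i+1}=J+\wp_s^{m_s}$ directly, which is the base case. For $i<m_s-1$, the hypothesis produces a homogeneous decomposition $X_0^{b-i}M=j+p$ with $j\in J$ and $p\in\wp_s^{i+1}$ of degree $b$; as $\wp_s^{i+1}$ is generated by the degree-$(i+1)$ monomials in $X_1,\dots,X_n$, the correction term $p$ is a $K$-linear combination of degree-$b$ monomials each of $X_1,\dots,X_n$-degree at least $i+1$. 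Each of these lies in $J+\wp_s^{m_s}$: those of $X_1,\dots,X_n$-degree $\ge m_s$ lie in $\wp_s^{m_s}$, while the remaining ones fall under the induction hypothesis at a strictly larger value of the index. Hence $p\in J+\wp_s^{m_s}$ and so $X_0^{b-i}M=j+p\in J+\wp_s^{m_s}$, which closes the induction, yields $(\star)$, and gives $\reg\le b$. The only point to watch here is that $p$ involves exclusively monomials of strictly larger $X_1,\dots,X_n$-degree, which is exactly what makes the descending induction well founded.
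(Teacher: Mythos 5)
Your proof is correct. Note that the paper does not prove this lemma at all: it is quoted from \cite[Lemma 3]{CTV} (``The following lemmas have been proved in \cite{CTV}''), so there is no internal proof to compare against; your argument---reducing $\reg(R_n/(J+\wp_s^{m_s}))\le b$ to the single condition $(R_n)_b\subseteq J+\wp_s^{m_s}$ for the Artinian quotient, writing degree-$b$ monomials as $X_0^{b-i}M$, and passing between $J+\wp_s^{m_s}$ and $J+\wp_s^{i+1}$ by descending induction on the $X_1,\ldots,X_n$-degree---is essentially the original argument of Catalisano--Trung--Valla.
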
  \par

\begin{lemma}\label{lem23} \cite[Corollary 2]{CTV}
 Let $s\ge 2$; $P_1, \ldots, P_s$ be distinct points in $\mathbb P^n$ and $m_1 \ge \cdots \ge m_s$ be positive integers. If $I=\wp_1^{m_1}\cap \cdots \cap  \wp_s^{m_s}$, then 
 $$\reg(R_n/I)\ge m_1+m_2-1.$$
\end{lemma}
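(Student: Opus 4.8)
The plan is to reduce to the case of two points and then compute a single Artinian regularity index by hand, using the recursion in Lemma \ref{lem21}. The key observation is that $\reg(R_n/J)$ is always one of the three quantities whose maximum equals $\reg(R_n/I)$ in Lemma \ref{lem21}, so that $\reg(R_n/I)\ge\reg(R_n/J)$ unconditionally. Writing $I=\wp_1^{m_1}\cap\cdots\cap\wp_s^{m_s}$ and peeling off one point at a time (dropping $\wp_s^{m_s}$, then $\wp_{s-1}^{m_{s-1}}$, and so on down to $\wp_3^{m_3}$), each step yields $\reg(R_n/I)\ge\reg(R_n/J)$ for the truncated intersection $J$, so that $\reg(R_n/I)\ge\reg\big(R_n/(\wp_1^{m_1}\cap\wp_2^{m_2})\big)$. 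Because $m_1\ge\cdots\ge m_s$, the two surviving points carry exactly the two largest multiplicities, which is what the bound requires; note that this reduction is insensitive to the precise shape of the third term in Lemma \ref{lem21}, since we only use that $\reg(R_n/J)$ competes in the maximum.

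It then suffices to prove $\reg\big(R_n/(\wp_1^{m_1}\cap\wp_2^{m_2})\big)\ge m_1+m_2-1$. Applying Lemma \ref{lem21} once more, now with $s=2$ and $J=\wp_1^{m_1}$, bounds this below by the regularity index of the sum ring $\reg\big(R_n/(\wp_1^{m_1}+\wp_2^{m_2})\big)$. The heart of the argument is to evaluate this last number. Since $P_1\ne P_2$ we have $\wp_1+\wp_2=(X_0,\ldots,X_n)$, so $\wp_1^{m_1}+\wp_2^{m_2}$ is primary to the irrelevant ideal and $R_n/(\wp_1^{m_1}+\wp_2^{m_2})$ is Artinian; here the multiplicity is $0$, and the regularity index in the sense of this paper equals one more than the top nonzero degree of the Hilbert function.

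To pin down that top degree I would normalize coordinates so that $P_1=(1:0:\cdots:0)$ and $P_2=(0:1:0:\cdots:0)$, whence $\wp_1=(X_1,\ldots,X_n)$ and $\wp_2=(X_0,X_2,\ldots,X_n)$. A monomial $X_0^{a_0}\cdots X_n^{a_n}$ is nonzero modulo $\wp_1^{m_1}+\wp_2^{m_2}$ exactly when $a_1+\cdots+a_n\le m_1-1$ and $a_0+a_2+\cdots+a_n\le m_2-1$. Adding these two inequalities gives total degree at most $m_1+m_2-2$, with that value attained by $X_0^{m_2-1}X_1^{m_1-1}$; hence the top nonzero degree is exactly $m_1+m_2-2$, and the regularity index of the sum ring is $m_1+m_2-1$. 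Chaining the two applications of Lemma \ref{lem21} then yields $\reg(R_n/I)\ge m_1+m_2-1$. As a cross-check one may instead restrict $Z$ to the line through $P_1$ and $P_2$, where the scheme becomes $m_1P_1+m_2P_2$ on $\mathbb P^1$ with defining ideal generated in degree $m_1+m_2$ and regularity index $m_1+m_2-1$, and invoke that restriction to a linear subspace cannot raise the regularity index.

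I expect the main obstacle to be the bookkeeping in the base case rather than any deep idea. One must apply the Artinian convention correctly (multiplicity zero, so the regularity index is the top degree \emph{plus one}) in order to land on $m_1+m_2-1$ and not $m_1+m_2-2$, and one must check that the maximizing monomial genuinely satisfies both membership constraints and that no monomial of degree $m_1+m_2-1$ survives. The reduction step is routine given Lemma \ref{lem21}, but it is worth verifying that the coordinate normalization loses no generality and that dropping the smaller-multiplicity points in order really preserves $\wp_1^{m_1}\cap\wp_2^{m_2}$ with the intended exponents.
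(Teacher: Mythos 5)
Your proof is correct, but note that the paper never proves this lemma at all: it is imported verbatim as \cite[Corollary 2]{CTV}, so the relevant comparison is with the argument in that reference, which your proof essentially reconstructs --- repeated use of Lemma \ref{lem21} to discard all points except the two of largest multiplicity, followed by an explicit computation for the Artinian ring $R_n/(\wp_1^{m_1}+\wp_2^{m_2})$. Your peeling step is sound, since $\reg(R_n/J)$ always competes in the maximum, and your monomial computation is accurate: after normalizing $P_1=(1:0:\cdots:0)$, $P_2=(0:1:0:\cdots:0)$, the sum $\wp_1^{m_1}+\wp_2^{m_2}$ is a monomial ideal, the surviving monomials have top degree $m_1+m_2-2$ (attained by $X_0^{m_2-1}X_1^{m_1-1}$), and the Artinian convention then gives regularity index exactly $m_1+m_2-1$. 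Two caveats. First, your second application of Lemma \ref{lem21} takes the third term to be $\reg(R_n/(J+\wp_s^{m_s}))$, whereas the paper's printed statement of Lemma \ref{lem21} has exponent $m_s-1$ there; under the printed form your base case would only yield $m_1+m_2-2$. The printed exponent is a typo --- the original lemma of Catalisano--Trung--Valla, and the paper's own application of it in the proof of Proposition \ref{prop36}, both use $\wp_s^{m_s}$, and the printed form would in fact contradict the present lemma already for $s=2$ --- so your usage is the correct one, but you should say explicitly that you are working with the corrected statement. Second, your closing ``cross-check'' by restricting to the line through $P_1$ and $P_2$ silently invokes the fact that the regularity index of $m_1P_1+m_2P_2$ computed on that line equals the one computed in $\mathbb{P}^n$; that is precisely the nontrivial direction of Theorem \ref{thm37} of this paper, so it is not an independent verification, though as a sanity check it does no harm since your main argument never uses it.
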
  \par

 \par

\section{Invariant of regularity index under embedding}

From now on we use: for positive integers $m\ge n$, denote by $\phi_{n,m}: \mathbb P^n \to \mathbb P^m$  the embedding. If $P\in \mathbb P^n$, then denote by $\wp$ the defining prime ideal of $P$ and denote by $\q$ the defining prime ideal of $\phi(P) \in \mathbb P^m$.

\begin{lemma}\label{lem31} Let $\phi_{n,n+1}: \mathbb P^n \to \mathbb P^{n+1}$ be the embedding, $P\in \mathbb P^n$ and $r$ be a positive integer.  We have

i) If $f\in \q^r$, then $f(X_0, \ldots, X_n, 0)\in \wp^r$.

ii) $\wp^r = \q^r \cap R_n.$
\end{lemma}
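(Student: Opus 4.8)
The plan is to reduce both statements to an explicit monomial computation after choosing convenient coordinates. Since $\wp$ is the prime ideal of a single point $P\in\mathbb P^n$, a linear change of the variables $X_0,\ldots,X_n$ lets me assume $P=(1:0:\cdots:0)$, so that $\wp=(X_1,\ldots,X_n)$. Extended to act as the identity on $X_{n+1}$, this same linear change is a change of coordinates on $R_{n+1}$ that is compatible with the embedding $\phi_{n,n+1}$; under it $\phi_{n,n+1}(P)=(1:0:\cdots:0:0)$ and hence $\q=(X_1,\ldots,X_n,X_{n+1})$. Thus I may assume simultaneously that $\wp=(X_1,\ldots,X_n)$ and $\q=(X_1,\ldots,X_n,X_{n+1})$, so that $\wp^r$ and $\q^r$ are the $r$-th powers of these monomial ideals.

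For part (i), I would write the substitution $f(X_0,\ldots,X_n,0)$ as the image of $f$ under the $K$-algebra homomorphism $\pi\colon R_{n+1}\to R_n$ with $\pi(X_{n+1})=0$ and $\pi(X_i)=X_i$ for $i\le n$. Since $\q^r$ is generated as an ideal by the monomials $X_1^{a_1}\cdots X_n^{a_n}X_{n+1}^{a_{n+1}}$ with $a_1+\cdots+a_{n+1}\ge r$, it suffices to evaluate $\pi$ on these generators: a generator with $a_{n+1}>0$ maps to $0$, while a generator with $a_{n+1}=0$ is already a monomial of degree $\ge r$ in $X_1,\ldots,X_n$ and so lies in $\wp^r$. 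As $\pi$ is a ring homomorphism and $\wp^r$ is an ideal, $\pi(\q^r)\subseteq\wp^r$, which is exactly (i).

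For part (ii), the inclusion $\wp^r\subseteq\q^r\cap R_n$ is immediate: under $R_n\hookrightarrow R_{n+1}$ one has $\wp\subseteq\q$, hence $\wp^r\subseteq\q^r$, and every element of $\wp^r$ already lies in $R_n$. For the reverse inclusion, take $f\in\q^r\cap R_n$. Because $f\in R_n$ does not involve $X_{n+1}$, it is fixed by the substitution, i.e. $\pi(f)=f$; and because $f\in\q^r$, part (i) gives $\pi(f)\in\wp^r$. Combining these, $f=\pi(f)\in\wp^r$, so $\q^r\cap R_n\subseteq\wp^r$ and equality holds.

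The computation itself is entirely routine; the only point deserving care is the first paragraph, namely verifying that a single linear change of coordinates puts $\wp$ and $\q$ into standard monomial form at the same time, so that the monomial description of both powers is available simultaneously. Once that normalization is in place, (i) is a one-line check on generators and (ii) follows formally from (i).
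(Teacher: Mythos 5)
Your proof is correct, and its organization differs from the paper's in a way worth noting. Both arguments ultimately rest on the same mechanism: the substitution $X_{n+1}\mapsto 0$ is a $K$-algebra homomorphism $\pi\colon R_{n+1}\to R_n$ fixing $R_n$ pointwise, so everything reduces to checking that $\pi$ carries a generating set of $\q^r$ into $\wp^r$. The paper does this check without normalizing coordinates: it writes $\wp=(l_0,\ldots,l_n)$ for linear forms $l_i\in R_n$, so that $\q=(l_0,\ldots,l_n,X_{n+1})$, and proves (i) by induction on $r$, factoring an element of $\q^r=\q\,\q^{r-1}$ and using multiplicativity of the substitution. You instead pay the cost up front with a linear change of coordinates making $\wp=(X_1,\ldots,X_n)$ and $\q=(X_1,\ldots,X_n,X_{n+1})$ monomial, after which $\q^r$ has an explicit monomial generating set and the check is a one-line case split, with no induction. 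That normalization is legitimate, but it is precisely the step you only sketch: what is needed is that the automorphism $\sigma$ of $R_{n+1}$ (linear on $X_0,\ldots,X_n$, fixing $X_{n+1}$) preserves $R_n$ and commutes with the substitution, $\pi\circ\sigma=(\sigma|_{R_n})\circ\pi$; this is what makes both statements (i) and (ii) transport across the coordinate change, and while the verification on the variables is routine, it is the actual content of your ``compatibility'' claim and should be written out. In exchange, your part (ii) is cleaner than the paper's: you deduce $f=\pi(f)\in\wp^r$ by applying (i) directly to $f$, whereas the paper re-expands $g\in\q^r$ as a sum of products and reruns the factor-by-factor argument instead of simply invoking its own part (i).
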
 
\begin{proof}   Let $P=(a_{0}, \ldots, a_{n})\in \mathbb P^n$. Then there exist linear forms $l_{0}, \ldots, l_{n} \in R_n=K[X_0, \ldots, X_{n}]$ such that $\wp=(l_{0}, \ldots, l_{n})$.
Since $\phi_{n,n+1}(P)=(a_{0}, \ldots, a_{n}, 0)\in \mathbb P^{n+1}$, we have $\q=(l_{0}, \ldots, l_{n}, X_{n+1})$. 

i) We argue by induction on $r$. If $f\in \q=(l_{0}, \ldots, l_{n}, X_{n+1})$, then $f$ can be written $$f=\sum_{i=0}^nl_if_i+X_{n+1}f_{n+1}$$
with $f_i\in R_{n+1}$, $i=0, \ldots, n+1$. We have 
\begin{align*}
f(X_0, \ldots, X_n, 0)&=\sum_{i=0}^nl_i(X_0, \ldots, X_n)f_i(x_0, \ldots, X_n, 0)+0f_{n+1}(X_0, \ldots, X_n, 0)\\
&\in (l_0, \ldots, l_n)=\wp.
\end{align*}
So, i) is true for $r=1$. Now we assume that i) is true for $r-1$. If $f\in \q^r=\q\q^{r-1}$, then $f$ can be written $$f=\sum_{i=1}^d g_ih_i$$
with some $d\in \mathbb N^*$, $g_i\in \q$ and $h_i\in \q^{r-1}$, $i=1, \ldots, d$. We have $g_i(X_0, \ldots, X_n, 0) \in \wp$ by the above result, and $h_i(X_0, \ldots, X_n, 0)\in \wp^{r-1}$ by inductive assumption. Therefore,
$$f(X_0, \ldots, X_n, 0)=\sum_{i=1}^d g_i(X_0, \ldots, X_n, 0)h_i(X_0, \ldots, X_n, 0) \in \wp \wp^{r-1}=\wp^r.$$

ii) We have $\wp=(l_{0}, \ldots, l_{n})$ and $\q=(l_{0}, \ldots, l_{n}, X_{n+1})$ with linear forms $l_{0}, \ldots, l_{n} \in R_n=K[X_0, \ldots, X_{n}]$. It is clear that $$\wp^{r} \subset \q^{r} \cap R_{n}.$$
For any $g\in \q^r \cap R_n$, since $g\in \q^r=\q\q^{r-1}$, there exists a positive integer $d$ such that $$g=\sum_{i=1}^d h_ik_i$$
with $h_i\in \q$ and $k_i\in \q^{r-1}$, $i=1, \ldots, d$. By i) we have  $h_i(X_0, \ldots, X_n, 0)\in \wp$ and $k_i(X_0, \ldots, X_n, 0)\in \wp^{r-1}$, $i=1, \ldots, d$. Since $g\in  \q^r \cap R_n$, we have $g=g(X_0, \ldots, X_n, 0)$. Therefore
\begin{align*}
g&=g(X_0, \ldots, X_n, 0)\\
&=\sum_{i=1}^d h_i(X_0, \ldots, X_n, 0)k_i(X_0, \ldots, X_n, 0) \in \wp \wp^{r-1}=\wp^r.
\end{align*}
So,  $\q^{r} \cap R_{n} \subset \wp^{r}$.
\end{proof}

\begin{corollary}\label{cor32} Let $\phi_{n,n+1}: \mathbb P^n \to \mathbb P^{n+1}$ be the embedding and $Z=m_1P_1+\cdots+m_sP_s$ be  fat points in $\mathbb P^n$. Then 

i) If $f\in I_{\phi_{n,n+1}(Z)}$, then $f(X_0, \ldots, X_n, 0) \in I_Z$.

ii) $I_Z = I_{\phi_{n,n+1}(Z)}\cap R_n$.
\end{corollary}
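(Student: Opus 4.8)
The plan is to deduce both parts directly from Lemma \ref{lem31}, applied separately to each prime power $\q_i^{m_i}$, using that the defining ideals are finite intersections of such powers. Throughout, I retain the notation of the corollary, so that $I_Z = \wp_1^{m_1} \cap \cdots \cap \wp_s^{m_s}$ in $R_n$ and $I_{\phi_{n,n+1}(Z)} = \q_1^{m_1} \cap \cdots \cap \q_s^{m_s}$ in $R_{n+1}$, where $\q_i$ is the prime ideal of $\phi_{n,n+1}(P_i)$.

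For part i), I would begin from the observation that any $f \in I_{\phi_{n,n+1}(Z)}$ lies in $\q_i^{m_i}$ for every $i = 1, \ldots, s$. Applying Lemma \ref{lem31} i) with $r = m_i$ and the point $P_i$, the restriction $f(X_0, \ldots, X_n, 0)$ then lies in $\wp_i^{m_i}$ for each $i$. Intersecting over all $i$ yields
$$f(X_0, \ldots, X_n, 0) \in \wp_1^{m_1} \cap \cdots \cap \wp_s^{m_s} = I_Z,$$
which is exactly the claim.

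For part ii), the one step worth spelling out is that intersection with the subring $R_n$ distributes over a finite intersection of ideals. I would write
$$I_{\phi_{n,n+1}(Z)} \cap R_n = (\q_1^{m_1} \cap \cdots \cap \q_s^{m_s}) \cap R_n = (\q_1^{m_1} \cap R_n) \cap \cdots \cap (\q_s^{m_s} \cap R_n),$$
the second equality holding because an element of $R_n$ belonging to $\q_1^{m_1} \cap \cdots \cap \q_s^{m_s}$ is precisely an element of $R_n$ belonging to each $\q_i^{m_i}$. Applying Lemma \ref{lem31} ii) to each factor replaces $\q_i^{m_i} \cap R_n$ by $\wp_i^{m_i}$, giving $\wp_1^{m_1} \cap \cdots \cap \wp_s^{m_s} = I_Z$.

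Since the corollary is an essentially immediate consequence of the lemma, I do not anticipate any genuine obstacle; the only point needing a word of justification is the distributivity used in part ii), which is the elementary set-theoretic fact noted above. Everything else is a componentwise application of Lemma \ref{lem31}.
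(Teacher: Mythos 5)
Your proposal is correct and follows essentially the same route as the paper: part i) applies Lemma \ref{lem31} i) to each $\q_i^{m_i}$ and intersects, and part ii) applies Lemma \ref{lem31} ii) componentwise to rewrite $I_{\phi_{n,n+1}(Z)}\cap R_n$ as $\wp_1^{m_1}\cap\cdots\cap\wp_s^{m_s}=I_Z$. The only difference is that you make explicit the elementary fact that intersecting with $R_n$ distributes over the finite intersection of the $\q_i^{m_i}$, which the paper leaves implicit.
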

\begin{proof} Recall that $I_Z=\wp_1^{m_1}\cap \cdots \cap \wp_s^{m_s}$ is the defining ideal of $Z$, and $I_{\phi_{n,n+1}(Z)}=\q_1^{m_1}\cap \cdots \cap \q_s^{m_s}$ is the defining ideal of $\phi_{n,n+1}(Z)$.  

i)  If $f\in I_{\phi_{n,n+1}(Z)}$, then $f\in \q_i^{m_i}$, $i=1, \ldots, s$. By Lemma \ref{lem31}, i) we have $f(X_0, \ldots, X_n, 0) \in \wp_i^{m_i}$, $i=1, \ldots, s$. It follows that $$f(X_0, \ldots, X_n, 0) \in \wp_1^{m_1}\cap \cdots \cap \wp_s^{m_s}=I_Z.$$

ii) For $i=1, \ldots, s$, by the Lemma \ref{lem31}, ii) we get $$\wp_i^{m_i}=\q_i^{m_i}\cap R_n.$$ 
It follows that
$$I_Z=\wp_1^{m_1}\cap \cdots \cap \wp_s^{m_s}=\q_1^{m_1}\cap \cdots \cap \q_s^{m_s} \cap R_n=I_{\phi_{n,n+1}(Z)}\cap R_n.$$
\end{proof}

\begin{corollary}\label{cor33} Let $m>n$ be positive integers, $\phi_{n,m}: \mathbb P^n \to \mathbb P^{m}$ be the embedding, $P\in \mathbb P^n$ and $r$ be a positive integer.  We have

i) If $f\in \q^r$, then $f(X_0, \ldots, X_n, 0, \ldots, 0)\in \wp^r$.

ii) $\wp^r = \q^r \cap R_m.$
\end{corollary}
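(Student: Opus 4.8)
The plan is to reduce everything to Lemma \ref{lem31} by an induction on $m$, exploiting the fact that the embedding adjoining $m-n$ zero coordinates factors as a chain of embeddings each adjoining a single zero coordinate. Concretely, I would use
$$\phi_{n,m} = \phi_{m-1,m} \circ \phi_{n,m-1},$$
so that the base case $m=n+1$ is exactly Lemma \ref{lem31}, and the inductive step attaches one more coordinate. The induction variable is $m$, with $n$ and $P$ held fixed throughout.

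For the inductive step, fix $P\in \mathbb P^n$ with defining prime ideal $\wp\subset R_n$, let $\wp'\subset R_{m-1}$ be the defining prime ideal of $\phi_{n,m-1}(P)$, and let $\q\subset R_m$ be the defining prime ideal of $\phi_{m-1,m}(\phi_{n,m-1}(P))=\phi_{n,m}(P)$. The induction hypothesis, applied to $\phi_{n,m-1}$, supplies both parts for the pair $(\wp,\wp')$, while Lemma \ref{lem31}, applied to $\phi_{m-1,m}$ (that is, with $n$ replaced by $m-1$), supplies both parts for the pair $(\wp',\q)$.

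For part i), I would take $f\in \q^r$. By Lemma \ref{lem31} i) we get $f(X_0,\ldots,X_{m-1},0)\in (\wp')^r$; substituting the remaining variables $X_{n+1}=\cdots=X_{m-1}=0$ and invoking part i) of the induction hypothesis then gives $f(X_0,\ldots,X_n,0,\ldots,0)\in \wp^r$, since composing these two substitutions is precisely the single substitution $X_{n+1}=\cdots=X_m=0$ defining $\phi_{n,m}$. For part ii), I would chain the two intersection identities along the inclusions $R_n\subset R_{m-1}\subset R_m$:
$$\wp^r = (\wp')^r \cap R_n = (\q^r\cap R_{m-1})\cap R_n = \q^r\cap R_n,$$
where the first equality is the induction hypothesis, the second is Lemma \ref{lem31} ii), and the last uses $R_n\subset R_{m-1}$.

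The only point requiring genuine care is the bookkeeping in the tower $R_n\subset R_{m-1}\subset R_m$: one must confirm that composing $X_m=0$ with $X_{n+1}=\cdots=X_{m-1}=0$ recovers $X_{n+1}=\cdots=X_m=0$, and that $\q^r\cap R_{m-1}\cap R_n=\q^r\cap R_n$. Both follow at once from the nesting of the variable sets, so no real obstacle arises beyond keeping the indices straight.
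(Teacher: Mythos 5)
Your proof is correct and takes essentially the same route as the paper: both reduce the statement to Lemma \ref{lem31} by factoring $\phi_{n,m}$ into single-coordinate embeddings, with yours merely organizing the iteration as a formal induction on $m$ (splitting off the last factor $\phi_{m-1,m}$), whereas the paper chains all $m-n$ steps $\phi_{i,i+1}$ at once. One small point: in part ii) the statement's ``$\q^r \cap R_m$'' is evidently a typo for ``$\q^r \cap R_n$'' (compare Lemma \ref{lem31} ii) and Corollary \ref{cor34} ii)), and your argument proves exactly the intended identity $\wp^r = \q^r \cap R_n$.
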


\begin{proof} Consider $m-n$ the embedding maps $$\phi_{i,i+1}: \mathbb P^i \to \mathbb P^{i+1},$$
$i=n, \ldots, m-1$. Use the Lemma \ref{lem31} for each the embedding $\phi_{i,i+1}: \mathbb P^i \to \mathbb P^{i+1}$, $i=n, \ldots, m-1$, and 
$$\phi_{n,m} = \phi_{m-1,m} \circ \cdots \circ \phi_{n,n+1}$$
we get i) and ii).
\end{proof}

\begin{corollary}\label{cor34} Let   $m>n$ be positive integers, $\phi_{n,m}: \mathbb P^n \to \mathbb P^{m}$ be the embedding and $Z=m_1P_1+\cdots+m_sP_s$ be fat points in $\mathbb P^n$. Then 

i) If $f\in I_{\phi_{n,m}(Z)}$, then $f(X_0, \ldots, X_n, 0, \ldots, 0) \in I_Z$.

ii) $I_Z = I_{\phi_{n,m}(Z)}\cap R_n$.
\end{corollary}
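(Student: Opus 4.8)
The plan is to copy the proof of Corollary \ref{cor32} almost verbatim, substituting the $m$-step statement Corollary \ref{cor33} for the single-step Lemma \ref{lem31}. Since Corollary \ref{cor33} has already absorbed the iteration along the factorization $\phi_{n,m} = \phi_{m-1,m} \circ \cdots \circ \phi_{n,n+1}$, no further induction on $m-n$ is needed at this stage. I would start by recalling the two defining ideals explicitly, $I_Z = \wp_1^{m_1} \cap \cdots \cap \wp_s^{m_s}$ in $R_n$ and $I_{\phi_{n,m}(Z)} = \q_1^{m_1} \cap \cdots \cap \q_s^{m_s}$ in $R_m$, where $\q_i$ is the defining prime of $\phi_{n,m}(P_i)$.

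For part i), I would take an arbitrary $f \in I_{\phi_{n,m}(Z)}$, so that $f \in \q_i^{m_i}$ for each $i = 1, \ldots, s$. Applying Corollary \ref{cor33} i) to the point $P_i$ gives $f(X_0, \ldots, X_n, 0, \ldots, 0) \in \wp_i^{m_i}$ for every $i$, and intersecting these memberships over $i$ yields $f(X_0, \ldots, X_n, 0, \ldots, 0) \in \wp_1^{m_1} \cap \cdots \cap \wp_s^{m_s} = I_Z$.

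For part ii), I would apply Corollary \ref{cor33} ii) to each index to obtain $\wp_i^{m_i} = \q_i^{m_i} \cap R_n$, and then invoke the elementary fact that intersecting with the subring $R_n$ commutes with finite intersections:
\begin{align*}
I_Z = \bigcap_{i=1}^s \wp_i^{m_i} = \bigcap_{i=1}^s \left( \q_i^{m_i} \cap R_n \right) = \left( \bigcap_{i=1}^s \q_i^{m_i} \right) \cap R_n = I_{\phi_{n,m}(Z)} \cap R_n.
\end{align*}

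I do not expect any genuine obstacle: once Corollary \ref{cor33} is available the whole argument is formal, and the only point worth a line of justification is that $\q_i$ really is the prime appearing in Corollary \ref{cor33} when that corollary is applied to $P_i$, which is immediate from the definition of the embedding. In effect all the real work has been pushed into the pointwise statement Corollary \ref{cor33}, and this corollary merely transports it through the finite intersection defining the fat-point ideals.
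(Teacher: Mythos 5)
Your proof is correct, but it takes a genuinely different route from the paper's. The paper proves Corollary \ref{cor34} by iterating the one-step fat-point statement, Corollary \ref{cor32}, along the factorization $\phi_{n,m}=\phi_{m-1,m}\circ\cdots\circ\phi_{n,n+1}$, i.e.\ it composes embeddings at the level of the fat-point ideals. You instead apply the already-composed pointwise statement, Corollary \ref{cor33}, to each point $P_i$ and then pass to the fat-point ideals by intersecting over $i=1,\ldots,s$; this mirrors exactly how the paper derives Corollary \ref{cor32} from Lemma \ref{lem31}. The two routes are the two ways around a commuting square (compose the embeddings first and then intersect over the points, versus intersect first and then compose), and both ultimately rest only on Lemma \ref{lem31}. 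Your route has the small advantage that the compatibility of intermediate stages --- that applying $\phi_{i,i+1}$ to the image scheme again produces the expected defining ideals --- is needed only for a single point, where it reduces to the identification of defining primes that you note; the paper's route needs that compatibility for the whole fat-point scheme at each intermediate step, which it leaves implicit. One incidental point: part ii) of Corollary \ref{cor33} as printed reads $\wp^r=\q^r\cap R_m$, an evident typo for $\q^r\cap R_n$ (compare Lemma \ref{lem31} ii)); your argument uses the corrected form, which is clearly what is intended, and your appeal to the elementary fact that intersection with the subring $R_n$ commutes with finite intersections is sound.
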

\begin{proof} Consider $m-n$ the embedding maps $$\phi_{i,i+1}: \mathbb P^i \to \mathbb P^{i+1},$$
$i=n, \ldots, m-1$. Use the Corollary \ref{cor32} for each the embedding $\phi_{i,i+1}: \mathbb P^i \to \mathbb P^{i+1}$, $i=n, \ldots, m-1$, and 
$$\phi_{n,m} = \phi_{m-1,m} \circ \cdots \circ \phi_{n,n+1}$$
we get i) and ii).
\end{proof}

\begin{proposition}\label{prop35}    Let $m>n$ be positive integers and $Z=m_1P_1+\cdots+m_sP_s$ be fat points in $\mathbb P^n$. Let $\phi_{n,n+1}: \mathbb P^n \to \mathbb P^{n+1}$ be the embedding.  Put $U=m_1P_1+\cdots+m_{s-1}P_{s-1}$. Then 
$$\reg(R_n/(I_U+\wp_s^{m_s})) = \reg(R_{n+1}/(I_{\phi_{n,n+1}(U)}+\q_s^{m_s})).$$
\end{proposition}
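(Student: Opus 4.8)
The plan is to reduce everything to the monomial membership criterion of Lemma \ref{lem22}. First I would normalize coordinates: after a linear change of variables in $R_n$ (which preserves $\reg$ and is compatible with $\phi_{n,n+1}$) I may assume $P_s=(1:0:\cdots:0)$, so that $\wp_s=(X_1,\ldots,X_n)$ in $R_n$ and $\q_s=(X_1,\ldots,X_n,X_{n+1})$ in $R_{n+1}$; I may also assume $X_0$ vanishes at none of the points. Applying Lemma \ref{lem22} with $J=I_U$ in $R_n$ and with $J=I_{\phi_{n,n+1}(U)}$ in $R_{n+1}$, the asserted equality reduces to proving, for every $b$, the equivalence of
\[
(A_b):\ X_0^{b-i}M\in I_U+\wp_s^{i+1}\ \ (\deg M=i,\ M\in K[X_1,\ldots,X_n],\ 0\le i\le m_s-1),
\]
\[
(B_b):\ X_0^{b-i}N\in I_{\phi_{n,n+1}(U)}+\q_s^{i+1}\ \ (\deg N=i,\ N\in K[X_1,\ldots,X_{n+1}],\ 0\le i\le m_s-1).
\]

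The implication $(B_b)\Rightarrow(A_b)$ is immediate: a monomial $M$ in $X_1,\ldots,X_n$ is also a monomial of $R_{n+1}$, so $(B_b)$ gives $X_0^{b-i}M\in I_{\phi_{n,n+1}(U)}+\q_s^{i+1}$, and applying the $K$-algebra map $R_{n+1}\to R_n$, $X_{n+1}\mapsto 0$ (which carries $I_{\phi_{n,n+1}(U)}$ into $I_U$ by Corollary \ref{cor32}\,i and $\q_s^{i+1}$ onto $\wp_s^{i+1}$) yields $(A_b)$. This gives $\reg(R_n/(I_U+\wp_s^{m_s}))\le\reg(R_{n+1}/(I_{\phi_{n,n+1}(U)}+\q_s^{m_s}))$.

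For $(A_b)\Rightarrow(B_b)$ I would record the decomposition by powers of $X_{n+1}$. Since $\q_j=\wp_jR_{n+1}+(X_{n+1})$ for $j<s$ and $\q_s=\wp_sR_{n+1}+(X_{n+1})$, an element $\sum_l F_lX_{n+1}^l$ with $F_l\in R_n$ lies in $I_{\phi_{n,n+1}(U)}$ iff each $F_l\in I_U^{(l)}:=\bigcap_{j<s}\wp_j^{\max(m_j-l,0)}$, and lies in $\q_s^{i+1}$ iff each $F_l\in\wp_s^{\max(i+1-l,0)}$. Writing a monomial $N$ of degree $i$ as $N=MX_{n+1}^{k}$ with $\deg M=i-k$, condition $(B_b)$ for $N$ thus becomes $X_0^{b-i}M\in I_U^{(k)}+\wp_s^{i-k+1}$. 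For $k=0$ this is $(A_b)$ pushed up, so the substance is $k\ge 1$; and by Lemma \ref{lem22} applied to the reduced scheme $U^{(k)}=\sum_{j<s}\max(m_j-k,0)P_j$ together with multiplicity $m_s-k$ at $P_s$, all these memberships follow once I prove the reduction estimate
\[
\reg\big(R_n/(I_{U^{(k)}}+\wp_s^{m_s-k})\big)\le\reg\big(R_n/(I_U+\wp_s^{m_s})\big)-k .
\]

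\textbf{The main obstacle is this reduction estimate}, namely that lowering every multiplicity by $k$ drops the regularity index by at least $k$. I would prove it by induction, reducing to the single step $k\to k-1$, which I expect to follow from the ideal containment
\[
(I_W+\wp_s^{c}):\wp_s\ \subseteq\ I_{W^-}+\wp_s^{c-1},
\]
where $W^-$ denotes $W$ with every multiplicity lowered by one. Granting this, one applies the criterion of Lemma \ref{lem22} for $W$ at one higher index to the monomials $X_tM$ ($t=1,\ldots,n$), which places $X_0^{(r-1)-i}M$ in the colon on the left (here $r$ is the current regularity index); the containment then yields the required membership for $W^-$, and iterating gives the estimate. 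Proving the colon containment is the delicate point: it is $\mathfrak m$-primary in nature, since both sides become the unit ideal after localizing at any $\wp_j$ or at $\wp_s$, so it has to be extracted from the graded/local structure at $P_s$, where $I_W$ and $I_{W^-}$ are units and $\wp_s$ is generated by the regular sequence $X_1,\ldots,X_n$.
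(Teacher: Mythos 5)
Your proposal is correct in its setup but has a genuine gap at its core, and you essentially say so yourself. What is solid: the direction $(B_b)\Rightarrow(A_b)$ via the substitution $X_{n+1}\mapsto 0$ (this is the paper's proof of $\reg(R_n/(I_U+\wp_s^{m_s}))\le\reg(R_{n+1}/(I_{\phi_{n,n+1}(U)}+\q_s^{m_s}))$, essentially verbatim), and the $X_{n+1}$-graded decomposition of $I_{\phi_{n,n+1}(U)}+\q_s^{i+1}$, which is correct and shows that the conditions in $(B_b)$ coming from monomials with $X_{n+1}$-exponent $k$ are exactly the Lemma \ref{lem22} criterion for $\reg(R_n/(I_{U^{(k)}}+\wp_s^{m_s-k}))\le b-k$, where $U^{(k)}=\sum_{j<s}\max(m_j-k,0)P_j$. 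But precisely because of that equivalence, your ``reduction estimate'' is not an auxiliary lemma: the family of these estimates over all $k$ \emph{is} the inequality $\reg(R_{n+1}/(I_{\phi_{n,n+1}(U)}+\q_s^{m_s}))\le\reg(R_n/(I_U+\wp_s^{m_s}))$, merely rewritten. You then reduce it further to the colon containment $(I_W+\wp_s^{c}):\wp_s\subseteq I_{W^-}+\wp_s^{c-1}$ and stop, offering only the remark that it is ``$\mathfrak m$-primary in nature'' and should come from the structure at $P_s$. That remark cannot be upgraded to a proof: since $V(I_W+\wp_s^c)=\emptyset$, both sides contain a power of $\mathfrak m=(X_0,\ldots,X_n)$ and every localization at $\wp_1,\ldots,\wp_s$ is the unit ideal, so the local picture you appeal to carries no information; the whole statement lives in the finitely many low graded degrees where the multiplicities at $P_1,\ldots,P_{s-1}$ interact with $\wp_s^c$, and nothing in the proposal controls that interaction. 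So the hard direction has been reformulated, not proved.

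The comparison with the paper is instructive, because the paper's own proof of this direction is thin at exactly the spot you isolated. The paper takes a bad monomial $M=NX_{n+1}^{c_{n+1}}$ of degree $i$ with $X_0^{t-i}M\notin I_{\phi_{n,n+1}(U)}+\q_s^{i+1}$, correctly deduces (using Corollary \ref{cor32}) that $X_0^{t-i}N\notin I_U+\wp_s^{i+1}$, and then invokes Lemma \ref{lem22} to conclude $\reg(R_n/(I_U+\wp_s^{m_s}))>t$; but when $c_{n+1}>0$ the monomial $N$ has degree $i-c_{n+1}$, while $t-i$ and $i+1$ are the exponents that Lemma \ref{lem22} attaches to degree $i$, so the lemma is being applied with mismatched data --- closing that mismatch is exactly your reduction estimate. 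In short, your diagnosis of where the difficulty sits is sharper than the paper's write-up, but as a proof the proposal is incomplete: to finish along your lines you must actually prove the reduction estimate (or the colon containment), and no argument for either is given.
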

\begin{proof} We have $I_U=\wp_1^{m_1} \cap \cdots \cap \wp_{s-1}^{m_{s-1}}$ and $I_{\phi_{n,n+1}(U)}=\q_{1}^{m_1} \cap \cdots \cap \q_{s-1}^{m_{s-1}}$. Put $P_s=(1, 0, \ldots, 0)$, then $\wp_s=(X_1, \ldots, X_n) \subset R_n$, $\phi_{n,n+1}(P_{s})=(1, 0, \ldots, 0)\in \mathbb P^{n+1}$ and $\q_{s}=(X_1, \ldots, X_{n+1})\subset R_{n+1}$. 

To prove the proposition we prove $\reg(R_n/(I_U+\wp_s^{m_s})) \ge \reg(R_{n+1}/(I_{\phi_{n,n+1}(U)}+\q_s^{m_s}))$ and $\reg(R_n/(I_U+\wp_s^{m_s})) \le \reg(R_{n+1}/(I_{\phi_{n,n+1}(U)}+\q_s^{m_s})).$

\medskip
\noindent {\it 1. Prove $\reg(R_n/(I_U+\wp_s^{m_s})) \ge \reg(R_{n+1}/(I_{\phi_{n,n+1}(U)}+\q_s^{m_s}))$}:
We prove that if there is a non-negative integer $t$ such that $$\reg(R_{n+1}/(I_{\phi_{n,n+1}(U)}+\q_s^{m_s})) > t,$$
 then
$$\reg(R_n/(I_U+\wp_s^{m_s})) > t.$$
 If 
$\reg(R_{n+1}/(I_{\phi_{n,n+1}(U)}+\q_s^{m_s})) > t,$
then by Lemma \ref{lem22} there exists a monomial $M=X_1^{c_1} \cdots X_{n+1}^{c_{n+1}}$, $c_1+\cdots+c_{n+1}=i$, $i\in \{0, \ldots, m_s-1\}$, such that $$X_0^{t-i}M \notin I_{\phi_{n,n+1}(U)}+\q_{s}^{i+1}.\qquad (1)$$
By Lemma \ref{lem31}, ii) we get $\wp_{s}^{i+1} = \q_s^{i+1}\cap R_n$. By Corollary \ref{cor32}, ii) we get
$I_U= I_{\phi_{n,n+1}(U)} \cap R_{n}$. Therefore, $$I_U+\wp_{s}^{i+1} \subset I_{\phi_{n,n+1}(U)}+\q_{s}^{i+1}.\qquad (2)$$ 
Consider monomial $N=X_1^{c_1} \cdots X_{n}^{c_{n}}$. If $X_0^{t-i}N \in I_U+\wp_{s}^{i+1}$, then  $$X_{n+1}^{c_{n+1}}X_0^{t-i}N\in I_U+\wp_{s}^{i+1}.\qquad \qquad (3)$$ From $(2)$ and $(3)$ we get 
$$X_0^{t-i}M=X_{n+1}^{c_{n+1}} X_0^{t-i}N\in I_{\phi_{n,n+1}(U)}+\q_s^{i+1},$$
which is contradiction with $(1)$. So, $$X_0^{t-i}N \notin I_U+\wp_{s}^{i+1}.$$
Then by Lemma \ref{lem22} we get
$$\reg(R/(J_\alpha+\wp_{s\alpha}^{m_s})) > t.$$
\medskip
\noindent {\it 2. Prove $\reg(R_n/(I_U+\wp_s^{m_s})) \le \reg(R_{n+1}/(I_{\phi_{n,n+1}(U)}+\q_s^{m_s}))$}: We prove that if there is a non-negative integer $t$ such that $$\reg(R_n/(I_U+\wp_s^{m_s})) > t,$$
 then
$$\reg(R_{n+1}/(I_{\phi_{n,n+1}(U)}+\q_s^{m_s})) > t.$$
If $\reg(R_n/(I_U+\wp_s^{m_s})) > t$, then by Lemma \ref{lem22} there exists a monomial $M=X_1^{c_1} \cdots X_{n}^{c_{n}}$, $c_1+\cdots+c_{n}=i$, $i\in \{0, \ldots, m_s-1\}$, such that $$X_0^{t-i}M \notin I_U+\wp_s^{i+1}.\qquad (4)$$
If $X_0^{t-i}M \in I_{\phi_{n,n+1}(U)}+\q_s^{i+1}$, then there are $f\in I_{\phi_{n,n+1}(U)}$ and $g\in 
\q_s^{i+1}$ such that $$X_0^{t-i}M = f + g.$$
 Since $f\in I_{\phi_{n,n+1}(U)}$,  we have $f(X_0, \ldots, X_n, 0)\in I_U$ by using Corollary \ref{cor32}, i). Since $g\in \q_s^{i+1}$,  we have $g(X_0, \ldots, X_n, 0)\in \wp^{i+1}$ by using Lemma \ref{lem31}, i). Then since $X_0^{t-i}M\in R_n$, we have 
$$X_0^{t-i}M = f(X_0, \ldots, X_n, 0)+ g(X_0, \ldots, X_n, 0)\in I_U + \wp^{i+1},$$
which is contradiction with $(4)$. So, $$X_0^{t-i}M \notin  I_{\phi_{n,n+1}(U)}+\q_s^{i+1}.$$
By Lemma \ref{lem22} we get
$$\reg(R_{n+1}/(I_{\phi_{n,n+1}(U)}+\q_s^{m_s})) > t.$$

\end{proof}

\begin{proposition}\label{prop36} Let $m>n$ be positive integers and $Z=m_1P_1+\cdots+m_sP_s$ be fat points in $\mathbb P^n$. Let $\phi_{n,n+1}: \mathbb P^n \to \mathbb P^{n+1}$ be the embedding. Then
$$\reg(R_n/I_Z) = \reg(R_{n+1}/I_{\phi_{n,n+1}(Z)}).$$
\end{proposition}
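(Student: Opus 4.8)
The plan is to argue by induction on the number $s$ of points, using Lemma \ref{lem21} to strip off one point at a time and then matching, term by term, the three quantities it produces in $R_n$ and in $R_{n+1}$. Since the regularity index of $Z$ does not depend on the labelling of the points, I am free to strip off $P_s$ (respectively $\phi_{n,n+1}(P_s)$) at each stage. For the base case $s=1$ I would note that $I_Z=\wp_1^{m_1}$ and $I_{\phi_{n,n+1}(Z)}=\q_1^{m_1}$, and that the regularity index of a single fat point of multiplicity $m_1$ equals $m_1-1$ by a direct computation that is insensitive to the ambient projective space; hence both sides equal $m_1-1$.

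For the inductive step, assume the equality for every set of $s-1$ fat points in $\mathbb P^n$, and write $U=m_1P_1+\cdots+m_{s-1}P_{s-1}$. Then $I_U=\wp_1^{m_1}\cap\cdots\cap\wp_{s-1}^{m_{s-1}}$ and $I_Z=I_U\cap\wp_s^{m_s}$, while $I_{\phi_{n,n+1}(U)}=\q_1^{m_1}\cap\cdots\cap\q_{s-1}^{m_{s-1}}$ and $I_{\phi_{n,n+1}(Z)}=I_{\phi_{n,n+1}(U)}\cap\q_s^{m_s}$; this compatibility of the peeling-off decomposition with the embedding is the one bookkeeping point to verify. Applying Lemma \ref{lem21} in $R_n$ and in $R_{n+1}$ would then give
\begin{align*}
\reg(R_n/I_Z) &= \max\left\{m_s-1,\ \reg(R_n/I_U),\ \reg(R_n/(I_U+\wp_s^{m_s}))\right\},\\
\reg(R_{n+1}/I_{\phi_{n,n+1}(Z)}) &= \max\left\{m_s-1,\ \reg(R_{n+1}/I_{\phi_{n,n+1}(U)}),\ \reg(R_{n+1}/(I_{\phi_{n,n+1}(U)}+\q_s^{m_s}))\right\}.
\end{align*}
I would then match the three entries: the first entries coincide trivially; the second entries agree by the induction hypothesis applied to the $s-1$ points of $U$; and the third entries agree by Proposition \ref{prop35}, applied to $U$ together with the point $P_s$. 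The two maxima are therefore equal, which closes the induction.

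The genuine difficulty has already been absorbed into Proposition \ref{prop35} (the statement for the sum ideals $I_U+\wp_s^{m_s}$), so the work left here is purely organisational: confirming the base case, checking that the recursion of Lemma \ref{lem21} is preserved under $\phi_{n,n+1}$, and recognising the middle term as precisely the induction hypothesis. The only subtle point I would address explicitly is that Proposition \ref{prop35} is recorded with $P_s=(1:0:\cdots:0)$; for a general $P_s$ one first applies a projective change of coordinates on $\mathbb P^n$, which lifts to $\mathbb P^{n+1}$ compatibly with $\phi_{n,n+1}$ and leaves both regularity indices unchanged, so that Proposition \ref{prop35} may be invoked in the required generality.
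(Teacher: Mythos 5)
Your proposal is correct and follows essentially the same route as the paper's own proof: induction on $s$ with the single fat point as base case, Lemma \ref{lem21} applied in both $R_n$ and $R_{n+1}$ to split off the last point, the induction hypothesis for the middle terms, and Proposition \ref{prop35} for the sum ideals $I_U+\wp_s^{m_s}$. The only differences are cosmetic: the paper orders $m_1\ge\cdots\ge m_s$ and invokes Lemma \ref{lem23} to discard the $m_s-1$ entry from both maxima, whereas you simply match those entries trivially (which suffices), and your closing remark about reducing to $P_s=(1:0:\cdots:0)$ makes explicit a coordinate normalization that the paper's proof of Proposition \ref{prop35} performs without comment.
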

\begin{proof} We prove the proposition by induction on $s$. Without loss of generality we may assume that $m_1 \ge \ldots \ge m_s$. 

If $s=1$, then $Z=m_1P_1$, $I_Z=\wp_1^{m_1}\subset R_n$ and $\phi_{n,n+1}(Z)=m_1\phi_{n,n+1}(P_1)$, $I_{\phi_{n,n+1}(Z)})=\q_1^{m_1}\subset R_{n+1}$. It is well known that $\reg(R_n/\wp_1^{m_1})=m_1-1$ and $\reg (R_{n+1}/\q_1^{m_1})=m_1-1$. Thus, the proposition is true for $s=1$. 

Let $s\ge 2$, assume that the proposition is true for $s-1$. Put $$U=m_1P_1+\cdots+m_{s-1}P_{s-1}.$$
Then $I_U=\wp_1^{m_1}\cap \cdots \cap \wp_{s-1}^{m_{s-1}}$, $I_Z=I_U \cap \wp_{s}^{m_s}$,  $I_{\phi_{n,n+1}(U)}=\q_1^{m_1}\cap \cdots \cap \q_{s-1}^{m_{s-1}}$ and $I_{\phi_{n,n+1}(Z)}=I_{\phi_{n,n+1}(U)}\cap \q_{s}^{m_s}$. By Lemma \ref{lem21} we get 
 $$\reg(R_n/I_Z) =
\max\left\{m_s-1, \reg(R_n/I_U), \reg(R_n/(I_U+\wp_s^{m_s})) \right\}$$
and
$$\reg(R_{n+1}/I_{\phi_{n,n+1}(Z)}) =
\max\left\{m_s-1, \reg(R_{n+1}/I_{\phi_{n,n+1}(U)}), \reg(R_{n+1}/(I_{\phi_{n,n+1}(U)}+\q_{s}^{m_s})) \right\}.$$

By Lemma \ref{lem23} we have $\reg(R_n/I_Z) \ge m_1+m_2-1$ and $\reg(R_{n+1}/I_{\phi_{n,n+1}(Z)}) \ge m_1+m_2-1$. So, 
$$\reg(R_n/I_Z) =
\max\left\{\reg(R_n/I_U), \reg(R_n/(I_U+\wp_s^{m_s})) \right\}$$
and
$$\reg(R_{n+1}/I_{\phi_{n,n+1}(Z)}) =
\max\left\{\reg(R_{n+1}/I_{\phi_{n,n+1}(U)}), \reg(R_{n+1}/(I_{\phi_{n,n+1}(U)}+\q_{s}^{m_s})) \right\}.$$
By inductive assumption we have
$$\reg(R_n/I_U) = \reg(R_{n+1}/I_{\phi_{n,n+1}(U)}).$$
By Proposition \ref{prop35} we have
$$\reg(R_n/(I_U+\wp_s^{m_s}))= \reg(R_{n+1}/(I_{\phi_{n,n+1}(U)}+\q_{s}^{m_s})).$$ Therefore, 
$$\reg(R_n/I_Z) = \reg(R_{n+1}/I_{\phi_{n,n+1}(Z)}).$$
\end{proof}

\medskip

From the result of Benedetti et al. in \cite[Lemma 4.4]{BFL} we can deduce 
$$\reg(R_n/I_Z) \ge \reg(R_{n+1}/I_{\phi_{n,n+1}(Z)}),$$
but it is difficult by using the method in \cite{BFL}  to show the equality.

\begin{theorem}\label{thm37} Let $m>n$ be positive integers, $Z=m_1P_1+\cdots+m_sP_s$ be fat points in $\mathbb P^n$ and $\phi_{n,m}: \mathbb P^n \to \mathbb P^{m}$ be the embedding. Then
$$\reg(R_n/I_Z) = \reg(R_m/I_{\phi_{n,m}(Z)}).$$
\end{theorem}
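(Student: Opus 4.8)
The plan is to reduce the general embedding $\phi_{n,m}$ to the single-step embedding already settled in Proposition \ref{prop36}. The essential observation is that $\phi_{n,m}$ factors as a composition of consecutive embeddings
$$\phi_{n,m} = \phi_{m-1,m} \circ \cdots \circ \phi_{n,n+1},$$
exactly as exploited in the proofs of Corollary \ref{cor33} and Corollary \ref{cor34}. So rather than treating the jump from $\mathbb{P}^n$ to $\mathbb{P}^m$ directly, I would climb one dimension at a time and invoke Proposition \ref{prop36} at each rung; the base case $m = n+1$ is literally that proposition.

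Concretely, I would introduce the intermediate fat point schemes: set $Z_n = Z$ and, for $i = n, \ldots, m-1$, put $Z_{i+1} = \phi_{i,i+1}(Z_i)$ as fat points in $\mathbb{P}^{i+1}$. The composition formula then yields $Z_m = \phi_{n,m}(Z)$. Before applying Proposition \ref{prop36} at each stage, I would check that every $Z_i$ is genuinely a set of fat points in $\mathbb{P}^i$ carrying the same multiplicities $m_1, \ldots, m_s$: since each $\phi_{i,i+1}$ is injective, the image points $\phi_{i,i+1}(P_1), \ldots, \phi_{i,i+1}(P_s)$ stay distinct, and the multiplicities are preserved by definition, so $Z_{i+1} = m_1\phi_{i,i+1}(P_1) + \cdots + m_s\phi_{i,i+1}(P_s)$ is again a legitimate fat point scheme to which Proposition \ref{prop36} applies.

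With this in place, applying Proposition \ref{prop36} to each single-step embedding $\phi_{i,i+1}: \mathbb{P}^i \to \mathbb{P}^{i+1}$ produces the chain of equalities
$$\reg(R_n/I_{Z_n}) = \reg(R_{n+1}/I_{Z_{n+1}}) = \cdots = \reg(R_m/I_{Z_m}).$$
Since $Z_n = Z$ and $Z_m = \phi_{n,m}(Z)$, the two ends of the chain give precisely $\reg(R_n/I_Z) = \reg(R_m/I_{\phi_{n,m}(Z)})$, as claimed. (Equivalently, one may phrase this as an induction on $d = m - n$, with $d = 1$ furnished by Proposition \ref{prop36} and the inductive step peeling off the outermost factor $\phi_{m-1,m}$.)

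Because all the genuine difficulty—the two-sided comparison of regularity indices under one embedding, carried out through the Catalisano–Trung–Valla monomial criterion of Lemma \ref{lem22} and the recursion of Lemma \ref{lem21}—is already encapsulated in Proposition \ref{prop36}, no substantive obstacle remains at this level. The only item needing care is the bookkeeping that the factorization reproduces $\phi_{n,m}$ on points and that the fat-point structure survives each intermediate stage; this is the mild "main obstacle," and it amounts to the same routine verification already performed in Corollary \ref{cor34}.
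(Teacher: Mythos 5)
Your proof is correct and follows essentially the same route as the paper: factor $\phi_{n,m}$ as the composition $\phi_{m-1,m}\circ\cdots\circ\phi_{n,n+1}$ of consecutive one-step embeddings and apply Proposition \ref{prop36} at each stage to obtain the chain of equalities of regularity indices. Your added check that each intermediate image is again a legitimate fat point scheme with the same multiplicities is a detail the paper leaves implicit, but it changes nothing of substance.
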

\begin{proof} Consider $m-n$ the embedding maps $$\phi_{i,i+1}: \mathbb P^i \to \mathbb P^{i+1},$$
$i=n, \ldots, m-1$. By Proposition \ref{prop36} we have 
\begin{align*}
\reg(R_n/I_Z) &= \reg(R_{n+1}/I_{\phi_{n,n+1}(Z)}),\\
\reg(R_{n+1}/I_{\phi_{n,n+1}(Z)})&= \reg(R_{n+2}/I_{\phi_{n+1,n+2} \circ \phi_{n,n+1}(Z)}),\\
\cdots & \cdots \cdots \\
\reg(R_{i+1}/I_{\phi_{i,i+1}\circ \cdots \circ \phi_{n,n+1}(Z)})&= \reg(R_{i+2}/I_{\phi_{i+1,i+2} \circ \cdots \circ \phi_{n,n+1}(Z)}),\\
\cdots & \cdots \cdots \\
\reg(R_{m-1}/I_{\phi_{m-2,m-1}\circ \cdots \circ \phi_{n,n+1}(Z)})&= \reg(R_{m}/I_{\phi_{m-1,m} \circ \cdots \circ \phi_{n,n+1}(Z)}).\\
\end{align*}
These follow that $$\reg(R_n/I_Z)=\reg(R_{m}/I_{\phi_{m-1,m} \circ \cdots \circ \phi_{n,n+1}(Z)}).$$ 
But $\phi_{m-1,m} \circ \cdots \circ \phi_{n,n+1}=\phi_{n,m}$. Thus, we get
$$\reg(R_n/I_Z)=\reg(R_m/I_{\phi_{n,m}(Z)}).$$
\end{proof}

\medskip
The configuration of fat points $\phi_{n,m}(Z)$ in $\mathbb P^m$ may be different to the configuration of fat points $Z$ in $\mathbb P^n$. Use the above theorem we can know the regularity index of the coordinate ring of $\phi_{n,m}(Z)$ when we know the regularity index of the coordinate ring of $Z$.

\medskip
\begin{example} \textnormal{A rational normal curve in $\mathbb P^n$ to be a curve of degree $n$ that may be given parametrically as the image of the map}
	\begin{align*} \mathbb P^1 &\to \mathbb P^n.\\
		(s, t) &\mapsto (s^n, s^{n-1}t, \ldots, t^n)
	\end{align*}

	\textnormal{Let $Z=m_1P_1+\cdots+m_sP_s$ be fat points in $\mathbb P^n$ with $m_1 \ge \cdots \ge m_s$ and $P_1, \ldots, P_s$ are on a rational normal curve of $\mathbb P^n$. Catalisano et al. showed that \cite[Proposition 7]{CTV}}
	$$\reg(R_n/I_Z) = \max\left\{m_1+m_2-1, \left[(\sum_{i=1}^s
	m_i+n-2)/n\right]\right\},$$ 
	\textnormal{where $\left[(\sum_{i=1}^s m_i+n-2)/n\right]$ is the integer part of the rational number $\sum_{i=1}^s m_i+n-2)/n$.} 
		
\textnormal{Let  $m>n$ be positive integers and $\phi_{n,m}: \mathbb P^n \to \mathbb P^m$ be the embedding. If $s > n+1$, then  $\phi_{n,m}(P_1), \ldots, \phi_{n,m}(P_s)$ are not on a rational normal curve in $\mathbb P^m$. So, we can not use  \cite[Proposition 7]{CTV} to know $\reg(R_m/I_{\phi_{n,m}(Z)})$. But now by using Theorem \ref{thm37} we get}
	$$\reg(R_m/I_{\phi_{n,m}(Z)}) = \reg(R_n/Z)= \max\left\{m_1+m_2-1, \left[(\sum_{i=1}^s
	m_i+n-2)/n\right]\right\}.$$
\textnormal{(also see \cite[Corollary 5.5]{NT} or \cite[Proposition 10]{Th5} for the similar results).}
\end{example}

\medskip

\section{Is the Hilbert function invariant under embedding?}
 Let  $m > n$ be positive integers,  $Z=m_1P_1+\cdots+m_sP_s$ be fat points in $\mathbb P^n$
and $\phi_{n,m}: \mathbb P^n \to \mathbb P^m$ be the embedding. What  is the relation between $H_{R_m/I_{\phi_{n,m}(Z)}}(t)$ and $H_{R_n/I_Z}(t)$? 
So far, we know only that $$H_{R_n/I_Z}(t)=\binom{t+n}{n}-\dim_K (I_Z)_t$$
and it strictly increases until it reaches the multiplicity $e(R_n/I_Z)=\underset{i=1}{\overset{s}{\sum}} \binom{m_i+n-1}{n}$, at which it stabilizes. Similarly, $$H_{R_m/I_{\phi_{n,m}(Z)}}(t)=\binom{t+m}{m}-\dim_K (I_{\phi_{n,m}(Z)})_t$$
and it strictly increases until it reaches the multiplicity $e(R_m/I_{\phi_{n,m}(Z)})=\underset{i=1}{\overset{s}{\sum}} \binom{m_i+m-1}{m}$, at which it stabilizes. Since $m>n$, we have 
$$\underset{i=1}{\overset{s}{\sum}} \binom{m_i+m-1}{m} \ge \underset{i=1}{\overset{s}{\sum}} \binom{m_i+n-1}{n},$$
the equality occurs if and only if $m_1=\cdots=m_s=1$.
By Theorem \ref{thm37} we have
$\reg(R_n/I_Z) =\reg(R_m/I_{\phi_{n,m}(Z)})$. So if $t\ge \reg(R_n/I_Z)$, then
$$H_{R_m/I_{\phi_{n,m}(Z)}}(t) = \underset{i=1}{\overset{s}{\sum}} \binom{m_i+m-1}{m}$$
and $$H_{R_n/I_Z}(t) = \underset{i=1}{\overset{s}{\sum}} \binom{m_i+n-1}{n}.$$
Thus, we get the following result.
\begin{proposition}\label{prop41} Let  $m > n$ be positive integers,  $Z=m_1P_1+\cdots+m_sP_s$ be fat points in $\mathbb P^n$
and $\phi_{n,m}: \mathbb P^n \to \mathbb P^m$ be the embedding. If $t\ge \reg(R_n/I_Z)$, then
$$H_{R_m/I_{\phi_{n,m}(Z)}}(t)= \underset{i=1}{\overset{s}{\sum}} \binom{m_i+m-1}{m} \ge  H_{R_n/I_Z}(t)= \underset{i=1}{\overset{s}{\sum}} \binom{m_i+n-1}{n}.$$
The equality occurs if and only if  $m_1=\cdots=m_s=1$.
\end{proposition}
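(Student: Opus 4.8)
The plan is to reduce everything to the stabilization behaviour of the Hilbert function together with Theorem \ref{thm37}. First I would recall that, by the definition of the regularity index, the Hilbert function $H_{R_n/I_Z}$ is non-decreasing and reaches the constant value $e(R_n/I_Z)=\sum_{i=1}^s\binom{m_i+n-1}{n}$ precisely at $t=\reg(R_n/I_Z)$, after which it remains constant; the same holds for $\phi_{n,m}(Z)$ with multiplicity $e(R_m/I_{\phi_{n,m}(Z)})=\sum_{i=1}^s\binom{m_i+m-1}{m}$. By Theorem \ref{thm37} the two regularity indices coincide, so for any $t\ge \reg(R_n/I_Z)=\reg(R_m/I_{\phi_{n,m}(Z)})$ both Hilbert functions have already stabilized. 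Hence $H_{R_n/I_Z}(t)=\sum_{i=1}^s\binom{m_i+n-1}{n}$ and $H_{R_m/I_{\phi_{n,m}(Z)}}(t)=\sum_{i=1}^s\binom{m_i+m-1}{m}$ for such $t$, which yields the two displayed equalities in the statement.

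It then remains to compare the two multiplicities. The key observation I would use is that $\binom{m_i+n-1}{n}$ is exactly the number of monomials of degree $m_i-1$ in the $n+1$ variables $X_0,\ldots,X_n$, while $\binom{m_i+m-1}{m}$ is the number of monomials of degree $m_i-1$ in the $m+1$ variables $X_0,\ldots,X_m$. Since $m>n$, every monomial of degree $m_i-1$ in $X_0,\ldots,X_n$ is in particular a monomial of degree $m_i-1$ in $X_0,\ldots,X_m$, so $\binom{m_i+m-1}{m}\ge\binom{m_i+n-1}{n}$ for each $i$, and summing over $i$ gives the asserted inequality $\sum_{i=1}^s\binom{m_i+m-1}{m}\ge\sum_{i=1}^s\binom{m_i+n-1}{n}$.

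For the equality case I would examine when each term is an equality. When $m_i=1$ the common degree $m_i-1$ equals $0$, and there is a single monomial (the constant $1$) regardless of the number of variables, so both binomials equal $1$ and the $i$-th term contributes an equality. Conversely, if some $m_j\ge 2$, then $m_j-1\ge 1$, and the monomial $X_{n+1}^{m_j-1}$ is counted by $\binom{m_j+m-1}{m}$ but not by $\binom{m_j+n-1}{n}$, forcing a strict inequality in the $j$-th term and hence in the whole sum. Therefore overall equality holds if and only if $m_1=\cdots=m_s=1$.

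The argument is essentially bookkeeping once Theorem \ref{thm37} is available; the only step requiring care is making the equality analysis precise, i.e.\ identifying the combinatorial meaning of the two binomial coefficients and pinpointing the extra monomials that appear when some $m_j\ge 2$. This is the part I expect to be the main (if modest) obstacle, since it is where the strictness in the equality characterization must be justified rather than merely asserted.
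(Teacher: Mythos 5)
Your proof is correct and follows essentially the same route as the paper: both use the stabilization of the Hilbert function at the multiplicity together with Theorem \ref{thm37} (equality of the two regularity indices) to reduce the statement to comparing $\sum_{i=1}^s\binom{m_i+m-1}{m}$ with $\sum_{i=1}^s\binom{m_i+n-1}{n}$. The only difference is that you justify the multiplicity inequality and its equality case by counting monomials of degree $m_i-1$ in $n+1$ versus $m+1$ variables, a detail the paper simply asserts.
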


\medskip
What is about the relation between $H_{R_m/I_{\phi_{n,m}(Z)}}(t)$ and $H_{R_n/I_Z}(t)$ when $t<\reg(R_n/I_Z)$? 

From now on let  $\wp^r =R_n$ if $r$ is a non-positive integer and $\wp$ is the defining ideal of a point $P\in \mathbb P^n$.

\begin{lemma}\label{lem42} Let $Z=m_1P_1+\cdots+m_sP_s$ be a set of  fat points in $\mathbb P^n$ and $\phi_{n,m}: \mathbb P^n \to \mathbb P^{m}$ be the embedding. If $0\le t<\reg(Z)$ and $f \in [I_{\phi_{n,m}(Z)}]_t$, then $f$ can be written in the form
$$f=_tf + _{t-1}fh_1 + \cdots + _{1}fh_{t-1} + h_{t},$$
where $_if \in [\wp_1^{m_1+i-t}\cap \cdots \cap\wp_s^{m_s+i-t}]_i $,  $h_i\in [K[X_{n+1}, \ldots, X_m]]_i$, $i= 1, \ldots, t$.
\end{lemma}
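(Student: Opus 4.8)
The plan is to grade $R_m=R_n[X_{n+1},\ldots,X_m]$ by the degree in the new variables $X_{n+1},\ldots,X_m$ and to read the asserted decomposition off this auxiliary grading. Assign weight $0$ to each of $X_0,\ldots,X_n$ and weight $1$ to each of $X_{n+1},\ldots,X_m$, and call this the $N$-grading; then $R_m=\bigoplus_{j\ge0}R_m^{(j)}$ with $R_m^{(j)}=\bigoplus_{|\gamma|=j}R_n X^\gamma$, the sum over monomials $X^\gamma=X_{n+1}^{\gamma_{n+1}}\cdots X_m^{\gamma_m}$ of new-degree $j$. Writing $f=\sum_{j=0}^t f^{(j)}$ for its $N$-homogeneous components, the statement reduces to showing that, for each $j$, every coefficient in $f^{(j)}=\sum_{|\gamma|=j}g_\gamma X^\gamma$ satisfies $g_\gamma\in\wp_1^{m_1-j}\cap\cdots\cap\wp_s^{m_s-j}$. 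Collecting the monomials $X^\gamma$ of each new-degree $j$ (so that $i=t-j$) then produces the displayed expression, with ${}_tf=f^{(0)}=f(X_0,\ldots,X_n,0,\ldots,0)\in I_Z$ recovering Corollary \ref{cor34}.

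First I would check that each power $\q_i^{m_i}$ is $N$-homogeneous, so that passing to the components $f^{(j)}$ is legitimate. Writing $\wp_i=(l_0,\ldots,l_n)$ with linear forms $l_k\in R_n$, we have $\q_i=(l_0,\ldots,l_n,X_{n+1},\ldots,X_m)$, whose generators are $N$-homogeneous of $N$-degree $0$ and $1$. Hence $\q_i^{m_i}$ is generated by $N$-homogeneous elements and is an $N$-graded ideal; since $f\in\q_i^{m_i}$ for every $i$, each component $f^{(j)}$ again lies in $\q_1^{m_1}\cap\cdots\cap\q_s^{m_s}=I_{\phi_{n,m}(Z)}$.

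The heart of the argument is to show, for a fixed $j$, that the coefficients $g_\gamma$ of $f^{(j)}$ lie in $\wp_i^{m_i-j}$ for each individual $i$; intersecting over $i$ then finishes the proof. Here I would treat one point at a time. Apply a linear change of the old variables $X_0,\ldots,X_n$ alone, i.e.\ an automorphism $\sigma$ of $R_m$ fixing $X_{n+1},\ldots,X_m$ (hence preserving the $N$-grading and the decomposition $f^{(j)}=\sum_\gamma g_\gamma X^\gamma$), chosen to move $P_i$ to $(1,0,\ldots,0)$, so that $\sigma(\wp_i)=(X_1,\ldots,X_n)$ and $\sigma(\q_i)=(X_1,\ldots,X_m)$. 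Then $\sigma(f^{(j)})$ lies in the monomial ideal $(X_1,\ldots,X_m)^{m_i}$, and the bookkeeping becomes elementary: every monomial of $\sigma(f^{(j)})$ has new-degree exactly $j$ and total degree $\ge m_i$ in $X_1,\ldots,X_m$, so its old part has degree $\ge m_i-j$ in $X_1,\ldots,X_n$, i.e.\ lies in $(X_1,\ldots,X_n)^{m_i-j}=\sigma(\wp_i^{m_i-j})$. Thus $\sigma(g_\gamma)\in\sigma(\wp_i^{m_i-j})$, and applying $\sigma^{-1}$ gives $g_\gamma\in\wp_i^{m_i-j}$; when $m_i-j\le0$ the claim is vacuous by the convention $\wp^r=R_n$ for $r\le 0$.

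Finally I would assemble the pieces: for each $i$ the coefficients of $f^{(j)}$ lie in $\wp_i^{m_i-j}$, hence in $\wp_1^{m_1-j}\cap\cdots\cap\wp_s^{m_s-j}$, which is the required condition with $i=t-j$ in the lemma's notation. The main obstacle I anticipate is exactly this coordinate normalization step: one cannot put all the $P_i$ in standard position simultaneously, so the argument must be organized so that the $N$-grading (which is coordinate-free in the old variables) allows each point to be treated separately while the canonical decomposition $f=\sum_\gamma g_\gamma X^\gamma$ stays fixed, the passage to the monomial ideal $(X_1,\ldots,X_m)^{m_i}$ being what makes the degree count trivial. One should also note that each term ${}_{t-j}f\,h_j$ in the statement is to be read as the collection $\sum_{|\gamma|=j}g_\gamma X^\gamma$ of such products of one shifted form by one new monomial.
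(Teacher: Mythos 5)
Your proof is correct, and it takes a genuinely different route from the paper's. Both arguments start from the same decomposition of $f$ into pieces of fixed degree in the new variables $X_{n+1},\ldots,X_m$, but the paper extracts the old-variable coefficients analytically: it differentiates $f$ to order $t-i$ with respect to the new variables, evaluates at $(X_0,\ldots,X_n,0,\ldots,0)$, and places the result in $\wp_1^{m_1-t+i}\cap\cdots\cap\wp_s^{m_s-t+i}$ by combining Corollary \ref{cor34} with the description of $I_{\phi_{n,m}(Z)}$ as the forms whose derivatives of order $\le m_j-1$ vanish at $\phi_{n,m}(P_j)$. You replace all of this with two purely algebraic observations: each $\q_j^{m_j}$ is generated by elements homogeneous for your $N$-grading, hence is an $N$-graded ideal, so every component $f^{(j)}$ already lies in $I_{\phi_{n,m}(Z)}$; and a linear substitution in the old variables alone (which preserves the $N$-grading and the coefficients $g_\gamma$) moves one point at a time to $(1,0,\ldots,0)$, turning $\q_j^{m_j}$ into the monomial ideal $(X_1,\ldots,X_m)^{m_j}$, where the needed degree count on coefficients is immediate. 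Your route buys two things. First, it is characteristic-free, whereas the paper's inference from ``derivatives of order $\le m_j-1-t+i$ vanish at $\phi_{n,m}(P_j)$'' back to membership in $\q_j^{m_j-t+i}$ is only valid in characteristic zero, and the paper imposes no hypothesis on the characteristic of $K$. Second, it treats honestly the fact that the bidegree piece of $f$ is a sum $\sum_{|\gamma|=j}g_\gamma X^\gamma$ of products rather than a single product ${}_{t-j}f\,h_j$: as literally written, the single-product decomposition in the lemma (and in equation $(5)$ of the paper's proof) is false in general (take $g_1X_{n+1}+g_2X_{n+2}$ with $g_1,g_2$ non-proportional), but your coefficientwise version is exactly what Proposition \ref{prop44} actually uses, since there each factor is expanded in a basis anyway. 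What the paper's route buys in exchange is that it requires no normalization of coordinates and never leaves the given coordinate system; your normalization is harmless, though, precisely because the decomposition into the $g_\gamma$ is invariant under linear changes of the old variables.
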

\begin{proof} If $f\in [I_{\phi_{n,n+1}(Z)}]_t$, then $f\in (K[X_0, \ldots, X_n])[X_{n+1}, \ldots, X_m]$. So $f$ can be written in the form
$$f= _tf + _{t-1}fh_1 + \cdots + _ifh_{t-i} + _{i-1}fh_{t-i-1} +\cdots + _{1}fh_{t-1} + h_{t},  \qquad (5)$$
where $_if \in [R_n]_i=[K[X_0, \ldots, X_n]]_i$, $h_i\in [K[X_{n+1}, \ldots, X_m]]_i$, $i= 1, \ldots, t$.  We prove that $_if \in  [\wp_1^{m_1-t+i} \cap \cdots \cap \wp_s^{m_s-t+i}]_i$, $i=1, \ldots, t$.

For $i=1, \ldots, t$, since $h_i\in [K[X_{n+1}, \ldots, X_m]]_i$, we get $h_i( 0, \ldots, 0)=0$. Thus, 
\begin{align*}
	 f(X_0, \ldots, X_n, 0, \ldots, 0) &= _tf(X_0, \ldots, X_n) +  _{t-1}f(X_0, \ldots, X_n)h_1(0, \ldots, 0) + \cdots   + h_{t}(0, \ldots, 0)\\
	 &=_tf (X_0, \ldots, X_n)+ _{t-1}f(X_0, \ldots, X_n)0 + \cdots  + 0\\
	 &= _tf(X_0, \ldots, X_n) .
\end{align*}

Since $f\in I_{\phi_{n,m}(Z)}$, by Corollary \ref{cor34} we get $f(X_0, \ldots, X_n, 0, \ldots, 0) \in I_Z$. Thus,
$$ _tf = _tf(X_0, \ldots, X_n)  = f(X_0, \ldots, X_n, 0, \ldots, 0) \in I_Z.$$
Note that $\deg (_tf)=t$, so we get $_tf\in [I_Z]_t$.

For $i=t-1, \ldots, 1$, if $_ifh_{t-i}\ne 0$ then $h_{t-i}$ has a non-zero term, say  $aX_{n+1}^{c_1} \cdots X_m^{c_m}$. Take the derivative to order $t-i$ with respect to the variables $X_{n+1}, \ldots, X_m$ of $_tf$ and $_jfh_{t-j}$, $j=t-1, \ldots, i+1$, we get:
$$\frac{\partial^{t-i}(_tf)}{\partial^{{c_1}}X_{n+1} \cdots \partial^{{c_m}}X_{m}} =0, \frac{\partial^{t-i}(_jfh_{t-j})}{\partial^{{c_1}}X_{n+1} \cdots \partial^{{c_m}}X_{m}} =0.$$
Thus, take the derivative to order $t-i$ with respect to the variables $X_{n+1}, \ldots, X_m$ on both sides of (5) we get:
$$\frac{\partial^{t-i}f}{\partial^{{c_1}}X_{n+1} \cdots \partial^{{c_m}}X_{m}} = (_if)c_1! \cdots c_m!+ (_{i-1}f)  \left( \frac{\partial^{t-i}h_{t-i+1}}{   \partial^{{c_1}}X_{n+1} \cdots \partial^{{c_m}}X_{m}}\right) + \cdots + \frac{\partial^{t-i}h_{t}}{   \partial^{{c_1}}X_{n+1} \cdots \partial^{{c_m}}X_{m}}.$$
For $j=t-i+1, \ldots, t$; if $\frac{\partial^{t-i}h_{j}}{   \partial^{{c_1}}X_{n+1} \cdots \partial^{{c_m}}X_{m}} \ne 0$, then $\frac{\partial^{t-i}h_{j}}{   \partial^{{c_1}}X_{n+1} \cdots \partial^{{c_m}}X_{m}} \in [K[X_{n+1}, \ldots, X_m]]_{j-t+i}$ with $j-t+i \ge 1$. Therefore, 
$$\frac{\partial^{t-i} f}{   \partial^{{c_1}}X_{n+1} \cdots \partial^{{c_m}}X_{m} }(X_0, \ldots, X_n, 0, \ldots, 0) =  c_1! \cdots c_m!_{i}f(X_0, \ldots, X_n)= c_1! \cdots c_m!_{i}f. \qquad (6)$$
We now prove that $\frac{\partial^{t-i} f}{  \partial^{{c_1}}X_{n+1} \cdots \partial^{{c_m}}X_{m}}(X_0, \ldots, X_n, 0, \ldots, 0) \in 
\wp_1^{m_1-t+i} \cap \cdots \cap \wp_s^{m_s-t+i}.$
It is well known that the set of all homogeneous polynomials in $R_{m}$ that vanish at $\phi_{n,m}(P_i)$ to order $m_i$; for $i=1, \ldots, s$; is the ideal $I_{\phi_{n,m}(Z)}$. 
Since $f\in I_{\phi_{n,m}(Z)}$, we have all derivatives of $f$ of order $\le m_j-1$ vanish at $\phi_{n,m}(P_j)$, $j=1, \ldots, s$. This implies that all derivatives of $\frac{\partial^{t-i} f}{  \partial^{{c_1}}X_{n+1} \cdots \partial^{{c_m}}X_{m}}$ of order $\le m_j-1-t+i$ vanish at $\phi_{n,m}(P_j)$, $j=1, \ldots, s$. So
 $$\frac{\partial^{t-i} f}{  \partial^{{c_1}}X_{n+1} \cdots \partial^{{c_m}}X_{m}} \in \q_1^{m_1-t+i} \cap \cdots \cap \q_s^{m_s-t+i}.$$
By Corollary \ref{cor34} we get
$$\frac{\partial^{t-i} f}{  \partial^{{c_1}}X_{n+1} \cdots \partial^{{c_m}}X_{m}}(X_0, \ldots, X_n, 0, \ldots, 0) \in 
\wp_1^{m_1-t+i} \cap \cdots \cap \wp_s^{m_s-t+i}.\qquad (7)$$
From $(6)$ and (7) we get 
$$_{i}f\in \wp_1^{m_1-t+i} \cap \cdots \cap \wp_s^{m_s-t+i}.$$
Note that $\deg (_if)=i$, so $_if \in [\wp_1^{m_1-t+i} \cap \cdots \cap \wp_s^{m_s-t+i}]_i$.
\end{proof} 

\begin{lemma}\label{lem43}  Let $Z=m_1P_1+\cdots+m_sP_s$ be fat points in $\mathbb P^n$ and $\phi_{n,m}: \mathbb P^n \to \mathbb P^{m}$ be the embedding. If $0\le i \le t <\reg(Z)$, $g \in [\wp_1^{m_1+i-t}\cap \cdots \cap\wp_s^{m_s+i-t}]_i $ and $h_{t-i}\in [R[X_{n+1}, \ldots, X_m]]_{t-i}$, then 
	$$gh_{t-i} \in [I_{\phi_{n,m}(Z)}]_t.$$
\end{lemma}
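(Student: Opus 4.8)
The plan is to prove membership of $gh_{t-i}$ in each power component $\q_j^{m_j}$ of $I_{\phi_{n,m}(Z)}=\q_1^{m_1}\cap\cdots\cap\q_s^{m_s}$ one index at a time, and then intersect. The key structural fact, read off from the proof of Lemma \ref{lem31}, is the explicit shape of the ideal of an embedded point: if $\wp_j=(l_0,\ldots,l_n)$ with the $l_k$ linear forms in $R_n$, then $\q_j=(l_0,\ldots,l_n,X_{n+1},\ldots,X_m)$. In particular $\q_j$ contains both the extension $\wp_jR_m$ and the monomial ideal $(X_{n+1},\ldots,X_m)$, which is all the algebra the argument will need. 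Recall that here $h_{t-i}$ is a form of degree $t-i$ in the new variables $X_{n+1},\ldots,X_m$ alone, as in Lemma \ref{lem42}.

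First I would record two containments for a fixed index $j$. Since $\wp_jR_m\subseteq\q_j$, taking powers gives $\wp_j^kR_m\subseteq\q_j^k$ for all $k\ge 0$; the hypothesis $g\in[\wp_1^{m_1+i-t}\cap\cdots\cap\wp_s^{m_s+i-t}]_i$ gives in particular $g\in\wp_j^{\,m_j+i-t}$, whence $g\in\q_j^{\,m_j+i-t}$ when $m_j+i-t>0$, and $g\in R_m=\q_j^{0}$ otherwise by the convention $\wp^r=R_n$ for $r\le 0$. Second, because $h_{t-i}$ is a homogeneous form of degree $t-i$ in $X_{n+1},\ldots,X_m$ alone, each of its monomials is a product of $t-i$ of those variables, so $h_{t-i}\in(X_{n+1},\ldots,X_m)^{t-i}\subseteq\q_j^{\,t-i}$.

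The decisive step is then purely multiplicative. Combining the two containments gives, for each $j$,
$$gh_{t-i}\in\q_j^{\max\{m_j+i-t,\,0\}}\cdot\q_j^{\,t-i}\subseteq\q_j^{\,\max\{m_j+i-t,\,0\}+(t-i)}.$$
If $m_j+i-t>0$ the exponent equals $(m_j+i-t)+(t-i)=m_j$, while if $m_j+i-t\le 0$ the exponent equals $t-i\ge m_j$; in either case $gh_{t-i}\in\q_j^{\,m_j}$. As this holds for every $j=1,\ldots,s$, we obtain $gh_{t-i}\in\q_1^{m_1}\cap\cdots\cap\q_s^{m_s}=I_{\phi_{n,m}(Z)}$, and since $\deg(gh_{t-i})=i+(t-i)=t$ the product lies in the graded piece $[I_{\phi_{n,m}(Z)}]_t$, as claimed.

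I do not expect a genuine obstacle: this lemma is essentially the converse of Lemma \ref{lem42}, and once the description $\q_j=\wp_jR_m+(X_{n+1},\ldots,X_m)$ is available the whole argument is an exponent count. The only place asking for care is the degenerate range $m_j+i-t\le 0$, where the convention $\wp^r=R_n$ must be invoked and one verifies that the factor $h_{t-i}$ by itself already contributes enough powers of $\q_j$, namely $t-i\ge m_j$; handling this uniformly via the $\max$ in the display above is what keeps the argument clean.
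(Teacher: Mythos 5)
Your proof is correct and follows essentially the same route as the paper's: both establish $gh_{t-i}\in\q_j^{m_j}$ for each $j$ separately by splitting into the cases $t-i\ge m_j$ (where $h_{t-i}\in(X_{n+1},\ldots,X_m)^{t-i}\subseteq\q_j^{t-i}\subseteq\q_j^{m_j}$ already suffices) and $t-i<m_j$ (where $g\in\q_j^{m_j+i-t}$ and $h_{t-i}\in\q_j^{t-i}$ multiply to give exponent $m_j$), and then intersect over $j$ with a degree check. Your $\max$-exponent formulation is just a uniform packaging of the paper's two cases, so there is nothing further to reconcile.
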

\begin{proof}
	First, we prove that for $j=1, \ldots, s$, if $f\in [\wp_j^{m_j+i-t}]_i$ then $$fh_{t-i}\in [\q_j^{m_j}]_t.$$
	In fact, if $t-i \ge m_j$, then $h_{t-i}\in [R[X_{n+1}, \ldots, X_m]]_{t-i} \subset \q_j^{m_j}$. It implies $fh_{t-i}\in \q_j^{m_j}$. Note that $\deg(f)=i$ and $\deg(h_{t-i})=t-i$, therefore $fh_{t-i}\in [\q_j^{m_j}]_t$. If $t-i<m_j$, then $f\in [\wp_j^{m_j+i-t}]_i \subset \q_j^{m_j+i-t}$ and $h_{t-i}\in  [R[X_{n+1}, \ldots, X_m]]_{t-i} \subset \q_j^{t-i}$. So,  $fh_{t-i}\in [\q^
	{m_j}]_t$.
	
	Now if  $g \in [\wp_1^{m_1+i-t}\cap \cdots \cap\wp_s^{m_s+i-t}]_i $, then $g\in [\wp_j^{m_j+i-t}]_i$ for $j=1, \ldots, s$. By using the above result we get $gh_{t-i}\in [\q_j^{m_j}]_t$ for $j=1, \ldots, s$. Therefore,
	$$gh_{t-i}\in [\q_1^{m_1}]_t \cap \cdots \cap [\q_s^{m_s}]_t =
	[I_{\phi_{n,m}(Z)}]_t.$$
\end{proof}

\begin{proposition}\label{prop44} Let $Z=m_1P_1+\cdots+m_sP_s$ be   fat points in $\mathbb P^n$ and $\phi_{n,m}: \mathbb P^n \to \mathbb P^{m}$ be the embedding.  If $0\le t < \reg (R_n/I_Z)$, then 
	$$\dim_K[I_{\phi_{n,m}(Z)}]_t =  \dim_K[I_Z]_t + \underset{i=0}{\overset{t-1}{\sum}} \binom{m-n+t-i}{t-i}\dim_K[\wp_1^{m_1-t+i} \cap \cdots \cap \wp_s^{m_s-t+i}]_i.$$	
\end{proposition}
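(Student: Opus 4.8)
The plan is to decompose both the ambient space $[R_m]_t$ and the ideal piece $[I_{\phi_{n,m}(Z)}]_t$ according to the degree carried by the new variables $X_{n+1},\ldots,X_m$, to identify the summands of the ideal explicitly, and then to add up dimensions.

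First I would record the monomial decomposition of the ambient space. Each monomial of degree $t$ in $R_m=K[X_0,\ldots,X_m]$ factors uniquely as a monomial in the old variables $X_0,\ldots,X_n$ times a monomial in the new variables $X_{n+1},\ldots,X_m$; collecting terms according to the degree $t-i$ of the new-variable factor gives the direct sum of $K$-vector spaces
$$[R_m]_t=\bigoplus_{i=0}^{t}[R_n]_i\otimes_K W_{t-i},\qquad W_d:=[K[X_{n+1},\ldots,X_m]]_d.$$
With the convention $\wp^r=R_n$ for $r\le 0$, set $V_i:=[\wp_1^{m_1-t+i}\cap\cdots\cap\wp_s^{m_s-t+i}]_i\subseteq[R_n]_i$. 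I claim that, under the decomposition above,
$$[I_{\phi_{n,m}(Z)}]_t=\bigoplus_{i=0}^{t}V_i\otimes_K W_{t-i}.$$

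The inclusion $\subseteq$ is the content of Lemma \ref{lem42} (which is where the hypothesis $t<\reg(R_n/I_Z)$ enters): given $f\in[I_{\phi_{n,m}(Z)}]_t$, writing $f$ as a polynomial in the new variables and collecting the part of new-degree $t-i$ shows that the coefficient of each degree-$(t-i)$ monomial in $X_{n+1},\ldots,X_m$ is a degree-$i$ form lying in $\wp_1^{m_1-t+i}\cap\cdots\cap\wp_s^{m_s-t+i}$, i.e.\ in $V_i$; hence the new-degree-$(t-i)$ part of $f$ lies in $V_i\otimes_K W_{t-i}$. The inclusion $\supseteq$ is Lemma \ref{lem43}: for $g\in V_i$ and any monomial of degree $t-i$ in the new variables, the product lies in $[I_{\phi_{n,m}(Z)}]_t$, so by $K$-linearity all of $V_i\otimes_K W_{t-i}$ does, for every $i$. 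Because $[R_m]_t=\bigoplus_i[R_n]_i\otimes_K W_{t-i}$ is a genuine direct sum (the monomials form a basis), this identification is a direct sum as well, and dimensions are additive:
$$\dim_K[I_{\phi_{n,m}(Z)}]_t=\sum_{i=0}^{t}(\dim_K V_i)(\dim_K W_{t-i}).$$
For the top index $i=t$ one has $V_t=[I_Z]_t$ and $W_0=K$, giving the summand $\dim_K[I_Z]_t$; for $0\le i<t$ the factor $\dim_K W_{t-i}$ is the number of monomials of degree $t-i$ in the $m-n$ new variables, namely $\binom{m-n-1+t-i}{t-i}$. Substituting these values produces
$$\dim_K[I_{\phi_{n,m}(Z)}]_t=\dim_K[I_Z]_t+\sum_{i=0}^{t-1}\binom{m-n-1+t-i}{t-i}\dim_K[\wp_1^{m_1-t+i}\cap\cdots\cap\wp_s^{m_s-t+i}]_i,$$
which is the asserted identity.

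The one genuinely delicate point is the inclusion $\subseteq$, and specifically the need to apply Lemma \ref{lem42} coefficient by coefficient. When there is more than one new variable, the new-degree-$(t-i)$ part of $f$ is a combination $\sum_{|\alpha|=t-i}c_\alpha X^\alpha$ of several monomials, and one must know that every individual coefficient $c_\alpha$ lies in $V_i$, not merely some aggregate of them. This is precisely what the differentiation argument inside the proof of Lemma \ref{lem42} delivers, since differentiating $f$ a total of $t-i$ times in the new variables isolates a single $c_\alpha$ while keeping it inside $\wp_1^{m_1-t+i}\cap\cdots\cap\wp_s^{m_s-t+i}$. Once this coefficient-wise membership is secured, the direct-sum structure makes the dimension count routine.
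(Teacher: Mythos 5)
Your proof is correct and follows essentially the same route as the paper's: the paper chooses explicit bases of $[\wp_1^{m_1-t+i}\cap\cdots\cap\wp_s^{m_s-t+i}]_i$ and of $[K[X_{n+1},\ldots,X_m]]_{t-i}$ and shows their pairwise products form a basis of $[I_{\phi_{n,m}(Z)}]_t$, using Lemma \ref{lem42} for generation and Lemma \ref{lem43} for membership --- which is exactly your direct-sum (tensor) decomposition written out in coordinates. One remark: the coefficient you derive, $\binom{m-n-1+t-i}{t-i}$, is the correct count of degree-$(t-i)$ monomials in the $m-n$ new variables and agrees with the paper's own proof and with Theorem \ref{thm45}, so the coefficient $\binom{m-n+t-i}{t-i}$ printed in the proposition's statement is a typo rather than a defect in your argument.
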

\begin{proof}  For $i=0, 1, \ldots, t$; let $r_i=\dim_K [\wp_1^{m_1-t+i} \cap \cdots \cap \wp_s^{m_s-t+i}]_i$ and $\{_ig_1, \ldots,\ _ig_{r_i}\}$ be a basis of $K$-vector space $[\wp_1^{m_1-t+i} \cap \cdots \cap \wp_s^{m_s-t+i}]_i$, put $d_{t-i}=\dim_K K[X_{n+1}, \ldots, X_m]_{t-i}$ and let $\{_{t-i}k_1, \ldots, _{t-i}k_{d_{t-i}}\}$ be a basis of $K$-vector space 	$[K[X_{n+1}, \ldots, X_m]]_{t-i}$.Then 
	$$\{ (_ig_j)(_{t-i}k_l)   | i=0, \ldots, t; j=1, \ldots, r_i; l=1, \ldots, d_{t-i}\} \text{ is linear independent in } [R_{m}]_t \qquad (8).$$
By Lemma \ref{lem42}, each polynomial $f \in [I_{\phi_{n,m}(Z)}]_t$ can be written in the form
$$f=_tf + _{t-1}fh_1 + \cdots + _{1}fh_{t-1} + h_{t}=\sum_{i=0}^t (_if)(h_{t-i}),$$
where $_if \in [\wp_1^{m_1+i-t}\cap \cdots \cap\wp_s^{m_s+i-t}]_i $,  $h_i\in [K[X_{n+1}, \ldots, X_m]]_i$; $i= 1, \ldots, t$; $_0f=h_0=1$.
 Since $\{_ig_1, \ldots,\ _ig_{r_i}\}$ is a basis of $K$-vector space $[\wp_1^{m_1-t+i} \cap \cdots \cap \wp_s^{m_s-t+i}]_i$, each $_if$ can be written in the form
	$$_if = \sum_{l=1}^{r_i}(c_{i,l})(_ig_l),$$
where $c_{i,l}\in K$, $l=1, \ldots, r_i$. Since $\{_{t-i}k_1, \ldots, _{t-i}k_{d_{t-i}}\}$ be a basis of $K$-vector space \\	$[K[X_{n+1}, \ldots, X_m]]_{t-i}$, each $h_i$ can be written in the form
$$h_i=\sum_{u=1}^{d_i}(b_{i,u})(_{i}k_u),$$
where $b_{i,u}\in K$, $u=1, \ldots, d_i$.
Therefore, $f$ can be written in the form
$$f=  \sum_{i=0}^t \left( \sum_{l=1}^{r_i}(c_{i,l})(_ig_l) \right) \left( \sum_{u=1}^{d_{t-i}}(b_{{t-i},u})(_{{t-i}}k_u)\right),$$
where all $c_{i,l}, b_{b-i,u}\in K$. By using Lemma \ref{lem43}, we have all $(_ig_l)(_{t-i}k_u)\in [I_{\phi_{n,m}(Z)}]_t$. So,  
$$\left\lbrace  (_ig_l)(_{t-i}k_u)   | i=0, \ldots, t; l=1, \ldots, r_i; u=1, \ldots, d_{t-i}\right\rbrace  \text{ is a generated set of } [I_{\phi_{n,n+1}(Z)}]_t \qquad (9).$$
From $(8)$ and $(9)$ we have $\left\lbrace  (_ig_j)(_{t-i}k_l)   | i=0, \ldots, t; j=1, \ldots, r_i; l=1, \ldots, d_{t-i}\right\rbrace $ is a basis of $K$-vector space $[I_{\phi_{n,m}(Z)}]_t$ and 
\begin{align*}\dim_K[I_{\phi_{n,m}(Z)}]_t &=  \underset{i=0}{\overset{t}{\sum}} r_id_{t-i} = r_t + \underset{i=0}{\overset{t-1}{\sum}}d_{t-i} r_i\\
&=\dim_K[I_Z]_t+\underset{i=0}{\overset{t-1}{\sum}} 	\binom{m-n-1+t-i}{t-i}\dim_K\left[ \wp_1^{m_1-t+i} \cap \cdots \cap \wp_s^{m_s-t+i}\right] _i.\end{align*}
\end{proof}

\begin{theorem}\label{thm45} Let $Z=m_1P_1+\cdots+m_sP_s$ be fat points in $\mathbb P^n$ and $\phi_{n,m}: \mathbb P^n \to \mathbb P^{m}$ be the embedding.  If $0\le t < \reg (R_n/I_Z)$, then 
\begin{align*}H_{R_{m}/I_{\phi_{n,m}(Z)}}(t) &= H_{R_n/I_Z}(t) + \binom{t+m}{m} - \binom{t+n}{n} \\
	&- \underset{i=0}{\overset{t-1}{\sum}}\binom{m-n-1+t-i}{t-i}  \left( \binom{i+n}{n} - H_{R_n/(\wp_1^{m_1-t+i} \cap \cdots \cap \wp_s^{m_s-t+i})}(i)\right) .\end{align*}
\end{theorem}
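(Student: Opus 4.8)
The plan is to obtain the formula by a direct substitution into the conclusion of Proposition \ref{prop44}, rewriting every dimension of a graded piece of an ideal as a value of the appropriate Hilbert function. The two elementary identities I would use are, for any homogeneous ideal $J\subset R_n$,
$$\dim_K[J]_i = \binom{i+n}{n} - H_{R_n/J}(i),$$
which holds because $[R_n]_i$ has dimension $\binom{i+n}{n}$, and likewise
$$\dim_K[I_{\phi_{n,m}(Z)}]_t = \binom{t+m}{m} - H_{R_m/I_{\phi_{n,m}(Z)}}(t),$$
which holds because $[R_m]_t$ has dimension $\binom{t+m}{m}$.

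First I would invoke Proposition \ref{prop44}, which for $0\le t<\reg(R_n/I_Z)$ expresses $\dim_K[I_{\phi_{n,m}(Z)}]_t$ as $\dim_K[I_Z]_t$ plus the weighted sum
$$\underset{i=0}{\overset{t-1}{\sum}}\binom{m-n-1+t-i}{t-i}\,\dim_K[\wp_1^{m_1-t+i}\cap\cdots\cap\wp_s^{m_s-t+i}]_i.$$
Then I would substitute $\dim_K[I_Z]_t = \binom{t+n}{n} - H_{R_n/I_Z}(t)$ for the leading term, and for each index $i$ in the sum substitute $\dim_K[\wp_1^{m_1-t+i}\cap\cdots\cap\wp_s^{m_s-t+i}]_i = \binom{i+n}{n} - H_{R_n/(\wp_1^{m_1-t+i}\cap\cdots\cap\wp_s^{m_s-t+i})}(i)$; these are all ideals of $R_n$, so the ambient binomial is $\binom{i+n}{n}$.

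Finally I would replace $\dim_K[I_{\phi_{n,m}(Z)}]_t$ on the left-hand side by $\binom{t+m}{m} - H_{R_m/I_{\phi_{n,m}(Z)}}(t)$ and solve for $H_{R_m/I_{\phi_{n,m}(Z)}}(t)$. Collecting the $\binom{t+m}{m}$ and $\binom{t+n}{n}$ terms and carrying the minus sign through the weighted sum yields exactly the asserted identity. I do not expect a genuine obstacle: the mathematical content is entirely contained in Proposition \ref{prop44}, and this step is bookkeeping. The only points that need care are (i) using the correct ambient space, since the binomials $\binom{i+n}{n}$ and the Hilbert functions inside the sum live in $R_n$ while the outer terms $\binom{t+m}{m}$ and $H_{R_m/I_{\phi_{n,m}(Z)}}(t)$ live in $R_m$; (ii) keeping the weight as $\binom{m-n-1+t-i}{t-i}=\dim_K K[X_{n+1},\ldots,X_m]_{t-i}$, the number of degree-$(t-i)$ monomials in the $m-n$ variables $X_{n+1},\ldots,X_m$; and (iii) tracking the sign, since the weighted sum enters with a plus sign in the dimension count but is subtracted once that count is converted into the Hilbert function $H_{R_m/I_{\phi_{n,m}(Z)}}(t)$.
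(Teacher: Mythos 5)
Your proposal is correct and is essentially the paper's own proof: the paper likewise converts each dimension count in Proposition \ref{prop44} into a Hilbert function via $\dim_K[J]_i=\binom{i+n}{n}-H_{R_n/J}(i)$ (in $R_n$) and $\dim_K[I_{\phi_{n,m}(Z)}]_t=\binom{t+m}{m}-H_{R_m/I_{\phi_{n,m}(Z)}}(t)$ (in $R_m$), then solves for $H_{R_m/I_{\phi_{n,m}(Z)}}(t)$. Your point (ii) is also well taken: the correct weight is $\binom{m-n-1+t-i}{t-i}=\dim_K[K[X_{n+1},\ldots,X_m]]_{t-i}$, which is what the proof of Proposition \ref{prop44} actually derives and what Theorem \ref{thm45} uses, even though the displayed statement of Proposition \ref{prop44} misprints it as $\binom{m-n+t-i}{t-i}$.
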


\begin{proof} We have $$H_{R_{m}/I_{\phi_{n,m}(Z)}}(t) = \binom{t+m}{m}- \dim_K[I_{\phi_{n,m}(Z)}]_t$$ and $$H_{R_n/(\wp_1^{m_1-t+i} \cap \cdots \cap \wp_s^{m_s-t+i})}(i)=\binom{i+n}{n} - \dim_K [\wp_1^{m_1-t+i} \cap \cdots \cap \wp_s^{m_s-t+i}]_i,$$
$i=0, \ldots, t$. Then by using Proposition \ref{prop44} we get
\begin{align*}
\binom{t+m}{m} - H_{R_{m}/I_{\phi_{n,m}(Z)}}(t) &= \binom{t+n}{n} - H_{R_n/(\wp_1^{m_1} \cap \cdots \cap \wp_s^{m_s})}(t) \\
&+  \underset{i=0}{\overset{t-1}{\sum}}\binom{m-n-1+t-i}{t-i} \left( \binom{i+n}{n} - H_{R_n/(\wp_1^{m_1-t+i} \cap \cdots \cap \wp_s^{m_s-t+i})}(i)\right) .
\end{align*}
It follows that 
\begin{align*}H_{R_{m}/I_{\phi_{n,m}(Z)}}(t) &= H_{R_n/I_Z}(t) + \binom{t+m}{m} - \binom{t+n}{n} \\
	&- \underset{i=0}{\overset{t-1}{\sum}}\binom{m-n-1+t-i}{t-i} \left( \binom{i+n}{n} - H_{R_n/(\wp_1^{m_1-t+i} \cap \cdots \cap \wp_s^{m_s-t+i})}(i)\right) .\end{align*}
\end{proof}

\begin{corollary}\label{cor46} Let $Z=m_1P_1+\cdots+m_sP_s$ be fat points in $\mathbb P^n$ and $\phi_{n,m}: \mathbb P^n \to \mathbb P^{m}$ be the embedding.  

a) If $m=n+1$ and $0\le t < \reg(R_n/I_Z)$, then 	
$$H_{R_{n+1}/I_{\phi_{n,n+1}(Z)}}(t) = H_{R_n/I_Z}(t) + \underset{i=0}{\overset{t-1}{\sum}} H_{R_n/(\wp_1^{m_1-t+i} \cap \cdots \cap \wp_s^{m_s-t+i})}(i).$$

b) $H_{R_{m}/I_{\phi_{n,m}(Z)}}(t) \ge  H_{R_n/I_Z}(t).$

c) If there exists $m_j \ge 2$, then 
	$$H_{R_{m}/I_{\phi_{n,m}(Z)}}(t) >  H_{R_n/I_Z}(t).$$
\end{corollary}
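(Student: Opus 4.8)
The plan is to reduce all three parts to a single identity that collapses the formula of Theorem \ref{thm45}. Throughout I assume, without loss of generality, that $m_1\ge \cdots \ge m_s$, so that the hypothesis ``there exists $m_j\ge 2$'' in part c) is the same as $m_1\ge 2$. Writing $J_i=\wp_1^{m_1-t+i}\cap\cdots\cap\wp_s^{m_s-t+i}$, the goal of the first step is to show that for $0\le t<\reg(R_n/I_Z)$ one has the clean ``master identity''
\begin{equation*}
H_{R_m/I_{\phi_{n,m}(Z)}}(t)-H_{R_n/I_Z}(t)=\sum_{i=0}^{t-1}\binom{m-n-1+t-i}{t-i}\,H_{R_n/J_i}(i).
\end{equation*}
Once this is in hand, all three assertions follow by inspecting this sum.

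To obtain the displayed formula I would start from Theorem \ref{thm45} and cancel the ``polynomial part'' $\binom{t+m}{m}-\binom{t+n}{n}$ against the $\binom{i+n}{n}$ terms. The key is the Vandermonde convolution
\begin{equation*}
\sum_{i=0}^{t}\binom{m-n-1+t-i}{t-i}\binom{i+n}{n}=\binom{t+m}{m},
\end{equation*}
which I read combinatorially: $\binom{m-n-1+t-i}{t-i}$ is the number of monomials of degree $t-i$ in the $m-n$ variables $X_{n+1},\ldots,X_m$, while $\binom{i+n}{n}$ counts the monomials of degree $i$ in $X_0,\ldots,X_n$, so the convolution counts all monomials of degree $t$ in $X_0,\ldots,X_m$. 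Peeling off the $i=t$ summand (which equals $\binom{t+n}{n}$) gives $\binom{t+m}{m}-\binom{t+n}{n}=\sum_{i=0}^{t-1}\binom{m-n-1+t-i}{t-i}\binom{i+n}{n}$; substituting this into Theorem \ref{thm45} makes the $\binom{i+n}{n}$ terms cancel and leaves exactly the master identity. Part a) is then the special case $m=n+1$, where $\binom{m-n-1+t-i}{t-i}=\binom{t-i}{t-i}=1$ and the identity reduces to $H_{R_{n+1}/I_{\phi_{n,n+1}(Z)}}(t)=H_{R_n/I_Z}(t)+\sum_{i=0}^{t-1}H_{R_n/J_i}(i)$; equivalently this case may be checked directly from the hockey-stick identity $\sum_{i=0}^{t-1}\binom{i+n}{n}=\binom{t+n}{n+1}$.

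For part b) I split into two ranges. When $t\ge \reg(R_n/I_Z)$ the inequality is exactly Proposition \ref{prop41}. When $0\le t<\reg(R_n/I_Z)$ the master identity exhibits $H_{R_m/I_{\phi_{n,m}(Z)}}(t)-H_{R_n/I_Z}(t)$ as a sum of products of the nonnegative coefficients $\binom{m-n-1+t-i}{t-i}$ with the nonnegative Hilbert values $H_{R_n/J_i}(i)$, hence it is $\ge 0$. For part c) (now $m_1\ge 2$ and $t\ge 1$) the range $t\ge \reg(R_n/I_Z)$ is again Proposition \ref{prop41}, whose equality case $m_1=\cdots=m_s=1$ is now excluded, giving strictness. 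In the range $1\le t<\reg(R_n/I_Z)$ I isolate the top summand $i=t-1$ of the master identity: its coefficient is $\binom{m-n}{1}=m-n>0$, and the ideal $J_{t-1}=\wp_1^{m_1-1}\cap\cdots\cap\wp_s^{m_s-1}$ is proper precisely because $m_1-1\ge 1$. Since the Hilbert function of a set of fat points is non-decreasing, $H_{R_n/J_{t-1}}(t-1)\ge H_{R_n/J_{t-1}}(0)=1>0$, so this single summand is already positive and the whole sum is positive. (At $t=0$ both Hilbert functions equal $1$, so the strict statement is understood for $t\ge 1$.)

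The main obstacle is the combinatorial simplification in the first step: recognizing the correct Vandermonde convolution and its monomial-counting interpretation is what turns the unwieldy formula of Theorem \ref{thm45} into the transparent master identity, after which a), b), c) are essentially immediate. A secondary point needing care is the strict positivity in c): one must verify both that the isolated coefficient $m-n$ is nonzero (guaranteed by $m>n$) and that $J_{t-1}$ is a proper ideal with $H_{R_n/J_{t-1}}(t-1)>0$, the latter resting on the monotonicity of the Hilbert function of fat points together with the convention $\wp^r=R_n$ for $r\le 0$ fixed just before Lemma \ref{lem42}.
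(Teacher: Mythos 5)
Your proof is correct, and for parts b) and c) it takes a genuinely different route from the paper's. The paper performs the binomial cancellation only in the codimension-one case $m=n+1$, where every coefficient $\binom{m-n-1+t-i}{t-i}$ equals $1$ and the cancellation is the hockey-stick identity $\binom{t+n+1}{n+1}-\binom{t+n}{n}=\sum_{i=0}^{t-1}\binom{i+n}{n}$; this gives a). It then deduces b) and c) for general $m$ by iterating a) along the chain of one-step embeddings $\phi_{n,n+1},\phi_{n+1,n+2},\ldots,\phi_{m-1,m}$ and composing the resulting inequalities (plus Proposition \ref{prop41} for $t\ge\reg(R_n/I_Z)$, exactly as you do); note that this iteration implicitly relies on Theorem \ref{thm37} to keep $t$ below the regularity index of each intermediate scheme. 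You instead carry out the cancellation for arbitrary $m$ in one step, via the Vandermonde/monomial-counting identity $\sum_{i=0}^{t}\binom{m-n-1+t-i}{t-i}\binom{i+n}{n}=\binom{t+m}{m}$, obtaining the master identity $H_{R_m/I_{\phi_{n,m}(Z)}}(t)-H_{R_n/I_Z}(t)=\sum_{i=0}^{t-1}\binom{m-n-1+t-i}{t-i}H_{R_n/J_i}(i)$, from which a), b), c) are read off with no composition of embeddings. What each buys: your route produces a clean closed formula valid for every $m$ (a genuine strengthening of a)) and avoids the chain-of-embeddings bookkeeping together with its hidden appeal to Theorem \ref{thm37}; the paper's route keeps the combinatorics to the trivial $m=n+1$ instance. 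Finally, your parenthetical remark about $t=0$ is a real improvement in rigor: as stated, c) fails at $t=0$ (both Hilbert functions equal $1$), and the paper's own proof glosses over this point --- its claim that $H_{R_n/(\wp_1^{m_1-1}\cap\cdots\cap\wp_s^{m_s-1})}(t-1)>0$ is false when $t=0$ --- so the strict inequality must indeed be understood for $t\ge 1$.
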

\begin{proof} a) If $m=n+1$ and $0\le t < \reg(R_n/I_Z)$, then by using Theorem \ref{thm45} we get
\begin{align*}H_{R_{n+1}/I_{\phi_{n,n+1}(Z)}}(t) &= H_{R_n/I_Z}(t) + \binom{t+n+1}{n+1} - \binom{t+n}{n} \\
	&- \underset{i=0}{\overset{t-1}{\sum}}\binom{t-i}{t-i} \left( \binom{i+n}{n} - H_{R_n/(\wp_1^{m_1-t+i} \cap \cdots \cap \wp_s^{m_s-t+i})}(i)\right) \\
&=H_{R_n/I_Z}(t) + \binom{t+n+1}{n+1} - \binom{t+n}{n} - \underset{i=0}{\overset{t-1}{\sum}}\binom{i+n}{n}  \\
& + \underset{i=0}{\overset{t-1}{\sum}} H_{R_n/(\wp_1^{m_1-t+i} \cap \cdots \cap \wp_s^{m_s-t+i})}(i)\\
&=H_{R_n/I_Z}(t) + \underset{i=0}{\overset{t-1}{\sum}} H_{R_n/(\wp_1^{m_1-t+i} \cap \cdots \cap \wp_s^{m_s-t+i})}(i).\\
\end{align*}
	
b) If $0\le t < \reg(R_n/I_Z)$, then by a) we get
\begin{align*}
	H_{R_{n+1}/I_{\phi_{n,n+1}(Z)}}(t) &\ge  H_{R_{n}/I_Z}(t),\\
		H_{R_{n+2}/I_{\phi_{n+1,n+2}\circ\phi_{n,n+1}(Z)}}(t) &\ge  H_{R_{n+1}/I_{\phi_{n,n+1}(Z)}}(t),\\
	\cdots\cdots\cdots\cdots\cdots\cdots\cdots\cdots\cdots\cdots&\cdots\cdot\cdots\cdots\cdots\cdots\cdots\cdots\cdots\\
			H_{R_{m}/I_{\phi_{m-1,m}\circ\phi_{m-2,m-1}\circ\cdots \circ\phi_{n+1,n+2}\circ\phi_{n,n+1}(Z)}}(t) &\ge 	H_{R_{m-1}/I_{\phi_{m-1,m-1}\circ\cdots \circ\phi_{n+1,n+2}\circ\phi_{n,n+1}(Z)}}(t).
\end{align*}
These imply that $$H_{R_{m}/I_{\phi_{m-1,m}\circ\phi_{m-2,m-1}\circ\cdots \circ\phi_{n+1,n+2}\circ\phi_{n,n+1}(Z)}}(t) \ge H_{R_{n}/I_Z}(t).$$ But  $$\phi_{n,m}(Z)=\phi_{m-1,m}\circ\phi_{m-1,m-1}\circ\cdots \circ\phi_{n+1,n+2}\circ\phi_{n,n+1}(Z).$$ Therefore,
$$H_{R_{m}/I_{\phi_{n,m}(Z)}}(t) \ge  H_{R_n/I_Z}(t).$$
	
	If $t \ge \reg(R_n/I_Z)$, then by Proposition \ref{prop41} we get 
	$$H_{R_{n+1}/I_{\phi_{n,n+1}(Z)}}(t) \ge  H_{R_n/I_Z}(t).$$

c) If there exists $m_j \ge 2$ and $0\le t < \reg(R_n/I_Z)$, we have $\wp_j^{m_j-1} \ne R_n$. So, \\ 
	$H_{R_n/(\wp_1^{m_1-1} \cap \cdots \cap \wp_s^{m_s-1})}(t-1) >0.$ By using a) we get  
	$$H_{R_{n+1}/I_{\phi_{n,n+1}(Z)}}(t) >  H_{R_n/I_Z}(t).$$
	By using b) we have 
		$$H_{R_{m}/I_{\phi_{n+1,m}\circ\phi_{n,n+1}(Z)}}(t) \ge H_{R_{n+1}/I_{\phi_{n,n+1}(Z)}}(t).$$
		But $\phi_{n,m}(Z)=\phi_{n+1,m}\circ\phi_{n,n+1}(Z)$. Therefore,
		$$H_{R_{m}/I_{\phi_{n,m}(Z)}}(t) > H_{R_{n}/I_Z}(t).$$
 If there exists $m_j \ge 2$ and $t \ge \reg(R_n/I_Z)$, then by Proposition \ref{prop41} we get	
 $$H_{R_{m}/I_{\phi_{n,m}(Z)}}(t) > H_{R_{n}/I_Z}(t).$$		
\end{proof}

\noindent Phan Van Thien, \\ Department of
Mathematics,  University of Education, Hue University, 34 Le Loi Street, Hue City, Vietnam \\ 
Email address:  pvthien@hueuni.edu.vn\\

\noindent Phan Quang Nhu Anh, \\ Faculty  of
Mathematics,  University of Science and Education - The University of Da Nang, 459 Ton Duc Thang Street, Da Nang City, Vietnam \\ 
Email address:  pqnanh@ued.udn.vn\\


\end{document}